\theoremstyle{plain}
\newtheorem{theorem}{Theorem}
\newtheorem{corollary}{Corollary}
\newtheorem{proposition}{Proposition}
\theoremstyle{definition}
\theoremstyle{remark}
\numberwithin{equation}{section}
\def\K#1#2{\displaystyle{\mathop K\limits_{#1}^{#2}}}
\def\J#1#2{\displaystyle{\mathop \sum\limits_{#1}^{#2}}}
\begin{document}
\title[Polynomial Continued Fractions ]
       {Polynomial Continued Fractions}
\author{D. Bowman}
\address{Mathematics Department\\
       University of Illinois \\
        Champaign-Urbana, Illinois 61820}
\email{bowman@math.uiuc.edu}
\author{J. Mc Laughlin}
\address{Mathematics Department\\
       University of Illinois \\
        Champaign-Urbana, Illinois 61820}
\email{jgmclaug@math.uiuc.edu}
\keywords{Continued Fractions}
\subjclass{Primary:11A55}
\begin{abstract}
Continued fractions whose elements are polynomial
 sequences\\
have been
carefully studied mostly in the cases where the degree of the
numerator polynomial
is less than or equal to two and the degree of the denominator polynomial
is less than or equal to one. Here we study cases of higher degree for both
numerator and denominator polynomials, with particular attention given to
cases in which the degrees are equal. We extend work of Ramanujan on
continued fractions with rational limits and also consider cases where the
limits are irrational.
\end{abstract}

\maketitle


\section{Introduction } \label{S:intro}

A polynomial continued fraction is a continued fraction
  $ K_{n=1}^{\infty}\displaystyle{a_{n}/b_{n}}$
where $a_n$ and $b_n$ are polynomials in $n$. Most well known continued
fractions are of this type. For example the first continued fractions
giving values for $\pi$\,(due to Lord Brouncker, first published in
 ~\cite{W65})  and $e$\,(\cite{E27})  are of this type:
\vspace{3pt}
\begin{equation}\label{E:lb}
\frac{4}{\pi} = 1 +
\K{n=1}{\infty}\displaystyle{\frac{(2n-1)^{2}}{2}} \,,
\end{equation}
\begin{equation}\label{E:eule}
e =
 2 + \cfrac{1}{1 +
\K{n=1}{\infty}\displaystyle{\frac{n}{n+1}}
}\,.
\end{equation}
Here we use the standard notations
\[
\K{n = 1}{N}\displaystyle{\frac{a_{n}}{b_{n}}} \,:=
\cfrac{a_{1}}{b_{1} + \cfrac{a_{2}}{b_{2} + \cfrac{a_{3}}{b_{3} +
\ldots +  \cfrac{a_{N}^{}}{b_{N} }}}}
  \]
\[
\phantom{asd} =  \frac{a_{1}}{b_{1}+}\, \frac{a_{2}}{b_{2}+}\,
\frac{a_{3}}{b_{3}+}\, \dots  \frac{a_{N}}{b_{N}}.
\]
We write $\displaystyle{A_{N}/B_{N}}$ for the above finite continued
fraction written as a rational function of the variables
$a_{1},...,a_{N},b_{1},...,b_{N}$.
By $K_{n = 1}^{\infty}\displaystyle{a_{n}/b_{n}}$
 we  mean the limit
of the sequence \{$\displaystyle{A_{n}/B_{n}}$\} as \,$n$\,goes to infinity,
if the limit exists.

The first systematic treatment of this type of continued fraction
seems to be in Perron~\cite{oP13} where  degrees through two for $a_n$
and degree one for $b_n$\, are studied. Lorentzen
and Waadeland~\cite{LW92} also study these cases in detail and they evaluate
all such continued fractions in terms of hypergeometric series.
There is presently no such systematic treatment for cases of
higher degree in the and examples in the literature are
accordingly scarcer.
 Of particular interest are cases where the degree of
$a_n$ is less than or equal to the degree of $b_n$. These cases
are interesting from a number theoretic standpoint since
the values of the continued fraction can then be approximated
exceptionally well by rationals and irrationality measures may
then be given. When the degrees are equal, the
value may be rational or irrational and certainly the latter when
the first differing coefficient is larger in $b_n$. (Here we count the
first coefficient as the coefficient of the largest degree term.)
Irrationality follows from the criterion given by Tietze,
extending the famous Theorem of Legendre
 (see Perron~\cite{oP13}, pp. 252-253) :
\vspace{5pt}
\newline
\textbf{Tietze's Criterion}:

Let $ \{\,a_{n}\}_{n = 1}^{\infty}$ be a sequence of integers and
 $\{\,b_{n}\}_{n = 1}^{\infty}$
be a sequence of positive integers, with
$\,a_{n} \, \neq \, 0$ \,for any $n$.
If there exists a positive integer  $N_{0}$  such that
\begin{equation}\label{E:Ttze1}
\begin{cases}
 b_{n} \geq |a_{n}|\, & \\
 b_{n} \geq |a_{n}|\,+ 1, & for\,\, a_{n+1}\,<\,0,
\end{cases}
\end{equation}
for all \,$n\, \geq N_{0}$  then \,
$K_{n = 1}^{\infty}\displaystyle{a_{n}/b_{n}}\,$
converges and it's limit is irrational.

It would seem from the literature that finding cases of equal
degrees or even close degrees is difficult. If one picks a
typical continued fraction from published tables, the degree
of the numerator tends to be twice that of the denominator.
One easy way in which this can arise is when the continued
fraction is equal to a series after using the Euler transformation:
\begin{equation}\label{E:eul}
\sum_{n \geq 0}a_{n}
=
 a_{0} + \frac{a_{1}}{1+}\, \frac{-a_{2}}{a_{1}+a_{2}+}\,
\frac{-a_{1}a_{3}}{a_{2}+a_{3}+}\,\frac{-a_{2}a_{4}}{a_{3}+a_{4}+}\,
\frac{-a_{3}a_{5}}{a_{4}+a_{5}+\dots}\,.
\end{equation}

If one side of this equality converges, then so does the other as the
$n$th approximants are equal. The Euler transformation is easily proved by
induction.

In this formula, if the terms of the series are rational functions
of the index of fixed degree, then in the continued fraction
after simplification, one will get the degrees of the numerators
to be at least twice that of the denominators. The
continued fraction for $\pi$ given by~\eqref{E:lb} is an example of this
phenomenon. Another example of this is the series definition of
Catalan's constant:
\[
C =
\sum_{k=0}^\infty
\frac{(-1)^{k}}{(2k+1)^{2}}
\]
which, by \eqref{E:eul},  transforms into the continued fraction given by
\[
C =  \frac{1}{1+}\,\frac{1^{4}}{8+}\,\frac{3^{4}}{16+}\,
\frac{5^{4}}{24+\ldots}\,.
\]
Here the degree of the numerator is four times that of the
denominator. This continued fraction appears to be new.

Taking contractions of continued fractions (see, for example,
Jones and Thron \cite{JT80}, pp. 38--43)  also leads to a
relative increase in the degree of the
numerator over that of the denominator. For example, forming the even
part of the continued fraction will cause a continued fraction with
equal degrees to be transformed
into one with twice the degree in the numerator as the denominator.\\
The even part of the continued fraction
$K_{n =1}^{\infty}\displaystyle{a_{n}/b_{n}}$
is equal to
\begin{multline*}
\frac{a_{1}b_{2}}{a_{2} + b_{1}b_{2}+ }\,
\frac{-a_{2}a_{3}b_{4}}{a_{3}b_{4} + b_{2}(a_{4}+b_{3}b_{4})  +}\\
\frac{-a_{4}a_{5}b_{2}b_{6}}
{a_{5}b_{6} + b_{4}(a_{6} + b_{5}a_{6}) +}\,
\frac{-a_{6}a_{7}b_{4}b_{8}}
{a_{7}b_{8} + b_{6}(a_{8} + b_{7}a_{8}) +\ldots} .
\end{multline*}

Other work on polynomial continued fractions was done by
Ramanujan~\cite{B89}, chapter 12.
He gave several cases of equal degree in which the sum is rational.
For example, Ramanujan gave the following:
If $x$ is not a negative integer then
\begin{equation}\label{E:Ram1}
\K{n=1}{\infty}\displaystyle{\frac{x+n}{x+n-1}} = 1.
\end{equation}
Despite the simplicity of this formula, Ramanujan did not give
a proof: the first proof seems to be have been given by Berndt
 \cite{B89}, Page 112.

In this paper we examine a large number of infinite classes of
polynomial continued fractions in which the degrees are equal,
or close. Our results follow from a theorem of Pincherle and a variant
of the Euler transformation discussed above. We obtain generalizations
of Ramanujan's results in which
the degrees are equal and the values rational as well as cases
of equal degree with irrational limits. Many of our
theorems give infinite families of continued fractions. While we
concentrate on polynomial continued fractions, many of the results
hold in more general cases.
Here are some special cases of our general results (proofs are given
throughout the paper) :

\vspace{2pt}


{\allowdisplaybreaks
\begin{equation}\label{Ex:eex1}
 \K{n=1}{\infty}\frac{n^{\alpha} + 1}{n^{\alpha}}  = 1,\,\, \text{for}\,\,\,
 \alpha >0.
\end{equation}
\begin{equation}\label{Ex:eex2}
 1 - \frac{\displaystyle{1}}
          {\displaystyle{1} + \K{n =1}{\infty}\displaystyle
          {\frac{n^{2} }
                {n^{2} + 2n }}} = J_{0}(2) ,
\end{equation}
where $J_{0}(x)$ is the Bessel function of the first kind
of order $0$.
\begin{equation}\label{Ex:eex3}
 2 + \K{n =1}{\infty}\displaystyle
{\frac{2n^{2} + n }
      {2n^{2} + 5n + 2 }} =\frac{1}
   {\sqrt{2}\,\, \csc{(\sqrt{2})} - 1 }.
\end{equation}
(Notice that the irrationality criterion mentioned above
means that the last two quantities on the right  are irrational)
\begin{equation}\label{Ex:eex8}
  \K{n = 1}{\infty}\frac{n^{12} + 2n^{11} + n^{10} + 4n + 5}
                      {n^{12} + 4n -4} = 4.
 \end{equation}
\begin{equation}\label{Ex:eex10}
  \K{n =2}{\infty}
   \frac{6n^{7} + 6n^{6} + 2n^{5} + 3n + 2}
        {6n^{7} - 6n^{6} + 2n^{5} + 3n - 5} = \frac{19}{7}.
 \end{equation}
\begin{equation}\label{Ex:eex13}
  \K{n =1}{\infty}
   \frac{3n^{6} - 3n^{4}+n^{3}+6n^{2}+6n-1 }
         {3n^{6} - 9n^{5} + 6n^{4}+n^{3}+6n^{2}-12n-2 } = 1.
   \end{equation}
}

\section{Infinite Polynomial Continued Fractions with Rational Limits}

In this section we derive some general results about the convergence of
polynomial continued fractions in some infinite families and give some
examples of how these results can be used to find the limit of such
continued fractions. Many of the results in this paper are consequences of
the following theorem of Pincherle ~\cite{P94} :


\begin{theorem}\label{T:P*}
(Pincherle)  Let $ \{\,a_{n}\}_{n = 1}^{\infty}$,
$\{\,b_{n}\}_{n = 1}^{\infty}$ and
 $ \{\,G_{n}\}_{n = -1}^{\infty}$ be \\ sequences of real or complex
  numbers satisfying  $a_{n} \neq 0$ for $n\geq 1$
 and for all \,\, $n \geq 1$,
\begin{equation}\label{E:pn}
 G_{n} =   a_{n}G_{n-2}  + b_{n} G_{n - 1}.
\end{equation}
Let $\{B_{n}\}_{n=1}^{\infty}$
denote the denominator convergents of the continued fraction
 $\K{n = 1}{\infty}\displaystyle{\frac{a_{n}}{b_{n}}}$.

If $\lim_{n \to \infty}\displaystyle{G_{n}/B_{n}} = 0$
then  $\K{n = 1}{\infty}\displaystyle{\frac{a_{n}}{b_{n}}}$
converges and  its limit is $-G_{0}/G_{-1}$.
\end{theorem}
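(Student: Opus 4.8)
The plan is to exploit the fact that the recurrence \eqref{E:pn} is a second-order linear recurrence whose two-dimensional solution space is spanned by the canonical numerator and denominator sequences of the continued fraction, and then to identify $\{G_n\}$ within that space. First I would recall the fundamental recurrence relations for the convergents: with the standard initial values $A_{-1}=1$, $A_0=0$, $B_{-1}=0$, $B_0=1$, the numerators $A_n$ and denominators $B_n$ of $\K{n=1}{\infty}\displaystyle{a_n/b_n}$ satisfy
\[
A_n = b_n A_{n-1} + a_n A_{n-2}, \qquad B_n = b_n B_{n-1} + a_n B_{n-2},
\]
for all $n\ge 1$. These are exactly relations of the form \eqref{E:pn}, so $\{A_n\}$ and $\{B_n\}$ are themselves solutions of the $G$-recurrence.

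Next I would establish that $\{A_n\}$ and $\{B_n\}$ are linearly independent by computing their Casoratian. A short induction (the standard determinant identity for convergents) gives
\[
A_n B_{n-1} - A_{n-1}B_n = (-1)^{n-1}\prod_{k=1}^{n} a_k,
\]
which is nonzero for every $n$ precisely because $a_n\ne 0$ for all $n\ge 1$. Since the solution space of \eqref{E:pn} is two-dimensional and these two solutions are independent, every solution — in particular $\{G_n\}$ — is a linear combination $G_n=\alpha A_n+\beta B_n$. Evaluating at $n=-1$ and $n=0$ and invoking the initial values above forces $\alpha=G_{-1}$ and $\beta=G_0$, so that
\[
G_n = G_{-1}A_n + G_0 B_n.
\]

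Finally, dividing by $B_n$ (which is nonzero for large $n$, as the hypothesis $G_n/B_n\to 0$ already presupposes) yields the identity $G_n/B_n = G_{-1}\,(A_n/B_n) + G_0$. Since the left-hand side tends to $0$ and $G_{-1}\neq 0$, the approximants $A_n/B_n$ converge, with limit $-G_0/G_{-1}$, which is exactly the assertion. The step I expect to be the main obstacle is the linear-independence argument: one must verify the Casoratian identity carefully and confirm that the hypothesis $a_n\neq 0$ is precisely what guarantees that $\{A_n\}$ and $\{B_n\}$ span the solution space. One must also dispose of the degenerate case $G_{-1}=0$, in which the stated value $-G_0/G_{-1}$ is ill-defined; here the identity collapses to $G_n/B_n\equiv G_0$, which is incompatible with the hypothesis $G_n/B_n\to 0$ unless $G_n\equiv 0$, so the nondegenerate case $G_{-1}\neq 0$ is the only one in which the conclusion is meaningful.
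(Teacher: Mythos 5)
Your proposal is correct, and it should be noted that the paper offers no proof of its own here: it simply cites Lorentzen and Waadeland \cite{LW92}, page 202, where essentially your argument appears. Both $\{A_n\}$ and $\{B_n\}$ satisfy the recurrence \eqref{E:pn}, every solution is the combination $G_n = G_{-1}A_n + G_0 B_n$, and dividing by $B_n$ converts the hypothesis $G_n/B_n \to 0$ into convergence of the approximants $A_n/B_n$ to $-G_0/G_{-1}$; your handling of the degenerate case $G_{-1}=0$ (where $G_n/B_n \equiv G_0$ forces $G_0=0$ and $G_n \equiv 0$, so the stated limit is meaningless and the theorem tacitly assumes $G_{-1}\neq 0$) is also sound and is a point the paper's statement glosses over. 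One correction of emphasis: the step you single out as the main obstacle is in fact the easiest, and the hypothesis $a_n \neq 0$ is not what makes it work. A solution of \eqref{E:pn} is determined by forward recursion from its values at $n=-1$ and $n=0$, and since $(A_{-1},A_0)=(1,0)$ and $(B_{-1},B_0)=(0,1)$, the identity $G_n = G_{-1}A_n + G_0 B_n$ follows immediately by induction, with no appeal to linear independence; your Casoratian identity $A_nB_{n-1}-A_{n-1}B_n = (-1)^{n-1}a_1a_2\cdots a_n$ is correct but superfluous for this purpose. The condition $a_n \neq 0$ belongs rather to the well-definedness of the continued fraction itself (a vanishing partial numerator would truncate it), and would be needed only if one had to run the recurrence backwards.
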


\begin{proof}
See, for example, Lorentzen and Waadeland~\cite{LW92}, page 202.
\end{proof}

For many sequences it may be difficult to decide
whether the
 condition
 $\lim_{n \to
\infty}$
$\displaystyle{G_{n}/B_{n}} = 0$\, is
satisfied. Below are some easily proven properties governing the growth of the
$B_{n}$'s which will be useful later.

\vspace{15pt}
(i) \quad
Let $a_{n}$\, and \, $b_{n}$
be  non-constant polynomials in $n$ such that $a_{n} \geq 1$,
$b_{n} \geq 1$,
for $n\geq1$ and suppose $b_{n}$ is a polynomial of degree\,$k$.
If the leading coefficient of $b_{n}$ is $D$, then given $\epsilon >0$,
there exists
a positive constant $C_{1}=C_{1}(\epsilon) $ such that
$B_{n} \geq C_{1}(|D|/(1+\epsilon))^{n}(n!)^{k}$.

\vspace{5pt}
(ii) \quad
If $a_{n}$\, and \, $b_{n}$ are positive numbers $\geq 1$, then  there
exists a
positive
constant $C_{3}$ such that $B_{n} \geq C_{3}\phi^{n}$ for $n\geq 1$,
where $\phi$ is
the golden ratio\, $\displaystyle{(1 + \sqrt{5})/2}$.
\vspace{10pt}

\begin{corollary}\label{C:c3}
If $m$ is a positive integer and
$b_{n}$ is any polynomial of degree $\geq1$ such that $b_{n} \geq 1$
for $n\geq1$, then
\[
 \K{n =1}{\infty}\displaystyle{\frac{mb_{n} + m^{2}}{b_{n}}} = m.
\]
\end{corollary}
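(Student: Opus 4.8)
The plan is to apply Pincherle's theorem (Theorem~\ref{T:P*}) with $a_n = mb_n + m^2$ and the given denominator $b_n$. The crux is to exhibit an explicit auxiliary sequence $\{G_n\}$ satisfying the three-term recurrence~\eqref{E:pn} whose ratio $G_n/B_n$ tends to $0$ and for which $-G_0/G_{-1} = m$. A natural guess, suggested by the form of the numerator $mb_n + m^2 = m(b_n + m)$, is the purely geometric sequence $G_n = (-m)^n$, defined for $n \geq -1$.

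First I would verify that this choice solves the recurrence. Substituting into $a_n G_{n-2} + b_n G_{n-1}$ and factoring out $(-m)^{n-2}$ leaves the bracket $(mb_n + m^2) + b_n(-m) = m^2$, so the right-hand side collapses to $(-m)^{n-2}m^2 = (-m)^n = G_n$; the cancellation of the $b_n$ terms is exactly what the special shape of the numerator is engineered to produce. With the recurrence confirmed, I would compute the candidate value: $G_0 = 1$ and $G_{-1} = -1/m$, whence $-G_0/G_{-1} = m$, as desired.

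Next I would check the hypotheses underlying the growth estimate. Since $b_n$ has degree $k \geq 1$ with $b_n \geq 1$, and since $a_n = mb_n + m^2 \geq m + m^2 \geq 1$, both $a_n$ and $b_n$ are non-constant polynomials that are $\geq 1$ for $n \geq 1$; in particular $a_n \neq 0$, so Pincherle applies and property~(i) is available. Property~(i) yields, for any $\epsilon > 0$, a constant $C_1 > 0$ with $B_n \geq C_1(|D|/(1+\epsilon))^n (n!)^k$, where $D$ is the leading coefficient of $b_n$. Hence
\[
\left| \frac{G_n}{B_n} \right| \leq \frac{1}{C_1 (n!)^k}\left( \frac{m(1+\epsilon)}{|D|} \right)^n ,
\]
and because $k \geq 1$ the factorial $(n!)^k$ dominates any fixed geometric rate, forcing $G_n/B_n \to 0$. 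Pincherle's theorem then delivers convergence of the continued fraction to $-G_0/G_{-1} = m$.

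The genuine content is spotting the closed form $G_n = (-m)^n$; everything after that is a one-line verification together with the routine factorial-beats-exponential comparison furnished by property~(i). The only point needing slight care is that the super-exponential growth of $B_n$ persists even when $|D|/(1+\epsilon) < 1$, which is guaranteed precisely because $\deg b_n \geq 1$ supplies the $(n!)^k$ factor.
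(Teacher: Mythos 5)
Your proof is correct and follows essentially the same route as the paper: apply Pincherle's theorem with a geometric auxiliary sequence and use growth property~(i) to force $G_n/B_n \to 0$. Your choice $G_n = (-m)^n$ is just a scalar multiple ($-m$ times) of the paper's $G_n = (-1)^{n+1}m^{n+1}$, which makes no difference since the recurrence is linear homogeneous and the limit $-G_0/G_{-1}$ is scale-invariant.
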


\begin{proof}
With $a_{n} = mb_{n} +
m^{2}$, for \,$ n \geq \, 1$, and  $G_{n} = (-1)^{n+1}m^{n+1}$,  for
\,$n\geq -1$,  $ \{\,a_{n}\}_{n = 1}^{\infty}$,
$\{\,b_{n}\}_{n = 1}^{\infty}$ and
 $ \{\,G_{n}\}_{n = -1}^{\infty}$ satisfy equation~\eqref{E:pn}.
By (i)
above
\[
\lim_{n \to
\infty}\displaystyle{G_{n}/B_{n}} =
\lim_{n \to
\infty}
\displaystyle{(-1)^{n+1}m^{n + 1}/B_{n}} = 0 \Longrightarrow
\K{n =1}{\infty}\displaystyle{\frac{a_{n}}{b_{n}}} = -G_{0}/G_{-1} = m.
\]
\end{proof}

A special case is where $m = 1$, in which case $|G_{n}| = 1$, for all
$n$ and all that is necessary is that $\lim_{n \to \infty} B_{n} =
\infty$. The following generalization
of  the result \eqref{E:Ram1} of Ramanujan, for positive numbers
greater  than 1 follows easily:

If $ \{\,b_{n}\}_{n =1}^{\infty}$ is any sequence of positive numbers
with $b_{n} \geq 1$ for $n\geq 1$ then
\[
\K{n = 1}{\infty}\displaystyle{\frac{b_{n} + 1}{b_{n}}} = 1.
\]

Letting $b_{n} = n^{\alpha}$, $\alpha >0$, gives \eqref{Ex:eex1} in the
introduction.\footnote{Lorentzen
and Waadeland give an exercise \cite[page 234]{LW92},
question 15(d)  which effectively involves a similar result in the case where
$b_{n}$ belongs to a certain family of quadratic polynomials in $n$
over the complex numbers.}

\vspace{10pt}

Entry 12 from the chapter on continued fractions in
Ramanujan's
second notebook \cite{B89}\,, page 118, follows  as a consequence of the
above theorem:


\begin{corollary}\label{C:Ram1}
If $x$ and $a$ are complex numbers, where $a \ne 0$ and $x \ne -ka$, where $k$ is a positive integer, then
\[
\displaystyle{ \frac{x+a}{a + \K{n =1}{\infty}
\displaystyle{\frac{(x + na)^{2} - a^{2}}{a}}}} = 1.
\]
\end{corollary}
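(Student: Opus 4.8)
The plan is to realize the entire left-hand expression as a single continued fraction and then invoke Pincherle's theorem (Theorem~\ref{T:P*}). Writing $c_k = x + ka$, one checks directly that
\[
\frac{x+a}{a + \K{n=1}{\infty}\frac{(x+na)^2 - a^2}{a}} = \K{n=1}{\infty}\frac{\hat a_n}{\hat b_n},
\]
where $\hat b_n = a$ for all $n$, $\hat a_1 = c_1$, and, since $(x+(n-1)a)^2 - a^2 = (x+(n-2)a)(x+na)$, we have $\hat a_n = c_{n-2}c_n$ for $n \geq 2$. The hypotheses $a \neq 0$ and $x \neq -ka$ guarantee each $\hat a_n \neq 0$ (away from the degenerate value $x=0$, which makes the first numerator vanish and can be treated directly), so Theorem~\ref{T:P*} applies.

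Next I would exhibit the relevant solution of~\eqref{E:pn} in closed form. Set $G_{-1} = -1$, $G_0 = 1$, and $G_n = (-1)^n \prod_{j=0}^{n-1} c_j$ for $n \geq 1$. Using only the identity $c_n - a = c_{n-1}$, a one-line computation shows $G_n = \hat a_n G_{n-2} + \hat b_n G_{n-1}$ for every $n \geq 1$. Since $-G_0/G_{-1} = 1$, Theorem~\ref{T:P*} will deliver the value $1$ as soon as the growth condition $\lim_{n\to\infty} G_n/B_n = 0$ is established.

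The heart of the matter is this last limit, and here crude estimates fail: because $\hat b_n$ is constant, property~(i) does not apply, and in fact $G_n$ decays only polynomially relative to the denominators $B_n$ (both behave like $|a|^n n!$ up to polynomial factors). I would therefore compute $B_n$ exactly. Solving $B_n = \hat a_n B_{n-2} + a B_{n-1}$ with $B_{-1}=0$, $B_0 = 1$ by a two-step induction gives
\[
B_{2m} = c_m \prod_{k=1}^{2m-1} c_k, \qquad B_{2m+1} = (m+1)\,a\prod_{k=1}^{2m} c_k,
\]
the even and odd inductive steps reducing respectively to the elementary identities $c_{2m}c_{m-1} + m a^2 = c_m c_{2m-1}$ and $m\,c_{2m+1} + c_m = (m+1)c_{2m}$. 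Comparing with $G_n$ then yields the clean evaluations $G_{2m}/B_{2m} = x/(x+ma)$ and $G_{2m+1}/B_{2m+1} = -x/\bigl((m+1)a\bigr)$, each of which tends to $0$ precisely because $a \neq 0$ and $x \neq -ma$; Pincherle's theorem then gives the value $1$.

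I expect the genuine obstacle to lie in this middle step, namely discovering the closed forms for $B_n$ in a regime where the two independent solutions of~\eqref{E:pn} are of comparable magnitude and no off-the-shelf growth estimate suffices. Once the formulas for $B_n$ are guessed, everything downstream is mechanical, so the difficulty is concentrated entirely in the guess-and-verify for the denominators rather than in any analytic estimate.
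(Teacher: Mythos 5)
Your proof is correct and is essentially the paper's own argument: both apply Pincherle's theorem (Theorem~\ref{T:P*}) with $G_n$ a signed product of the factors $x+ja$, and both then verify $\lim_{n\to\infty}G_n/B_n=0$ by deriving exact closed forms for the even- and odd-indexed denominators $B_n$ through a two-step induction, since no off-the-shelf growth estimate applies when the partial denominators are constant. The differences are cosmetic: the paper first normalizes, setting $m=x/a$, and applies Pincherle to the inner fraction with $a_n=(m+n-1)(m+n+1)$, $b_n=1$ (finding $B_{2k}=(m+k+1)\prod_{i=2}^{2k}(m+i)$ and $B_{2k+1}=(k+1)\prod_{i=2}^{2k+1}(m+i)$), whereas you keep $x$ and $a$ unnormalized and absorb the prefactor into a single continued fraction; your explicit treatment of the degenerate case $x=0$ (where a partial numerator vanishes) is a small point the paper passes over.
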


\begin{proof}
Note that\,
\[\displaystyle{ \frac{x+a}{a + \K{n =1}{\infty}
\displaystyle{\frac{(x + na)^{2} - a^{2}}{a}}}} \, = \,
\displaystyle{ \frac{\displaystyle{\frac{x}{a} + 1}}
{1 + \K{n =1}{\infty}
\displaystyle
{\frac{\left(\displaystyle{\frac{x}{a}} + n\right)^{2} - 1}{1}} }}.
\]
Replace  $\displaystyle{x/a}$  by $m$ to simplify notation;
 the
result will follow if it can be shown that
\begin{equation*}
m =\displaystyle{  \K{n =1}{\infty}
\displaystyle{\frac{(m + n)^{2} - 1}{1}}}
=\displaystyle{  \K{n =1}{\infty}
\displaystyle{\frac{(m + n - 1)(m + n +1)}{1}}}.
\end{equation*}
With \,$G_{-1} = 1$,  $G_{n} = (-1)^{n+1}\prod_{i=0}^{n}(m+i)$,
for \,$n\geq 0$
 and   $b_{n} = 1$, $a_{n} = (m + n - 1)(m + n +1)$ for
 \,$ n \geq \, 1$, $ \{\,a_{n}\}_{n = 1}^{\infty}$,
$\{\,b_{n}\}_{n = 1}^{\infty}$ and
 $ \{\,G_{n}\}_{n = -1}^{\infty}$ satisfy equation~\eqref{E:pn}
so that  the result
will follow from Theorem~\ref{T:P*} if it can be shown that
$\lim_{n \to
 \infty}\displaystyle{G_{n}/B_{n}} = 0$, in which case
the continued fraction will converge to $-G_{0}/G_{-1} = m$.
However,
an easy induction  shows that for
$k \geq \, 1$,
\begin{align*}
&B_{2k + 1} = (k + 1)\prod_{i=2}^{2k+1}(m+i),\quad
\text{and}\,\quad
B_{2k} =(m + k + 1)\prod_{i=2}^{2k}(m+i).
\end{align*}
Thus $\lim_{n \to
 \infty}\displaystyle{G_{n}/B_{n}} = 0$, and the result
follows.
\end{proof}

\vspace{10pt}


\begin{corollary}\label{C:c5}
Let $m$ be a positive integer and let
 $b_{n}$ be any polynomial of degree $\geq 1$ such that
$b_{n} \geq 1,$  for $n\geq1$ and either degree $b_{n} > 1$ or
if  degree \,$b_{n} = 1$
then its leading coefficient is $D > m$.
Then
\[\K{n =
1}{\infty}\displaystyle{\frac{mnb_{n} + m^{2}n(n +1)}{b_{n}}} = m.
\]
\end{corollary}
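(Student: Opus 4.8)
The plan is to apply Pincherle's theorem (Theorem~\ref{T:P*}) exactly in the spirit of Corollary~\ref{C:c3}, taking $a_n = mnb_n + m^2 n(n+1)$ with denominators $b_n$, but now with an auxiliary sequence $\{G_n\}$ that grows like a factorial. To discover the right $G_n$, I would posit $G_n = (-1)^{n+1} m^{n+1} c_n$ and substitute into the recurrence~\eqref{E:pn}. After collecting terms, the requirement $G_n = a_n G_{n-2} + b_n G_{n-1}$ reduces to
\[
b_n\,(n\,c_{n-2} - c_{n-1}) + m\,n(n+1)\,c_{n-2} = m\,c_n,
\]
which must hold for every admissible $b_n$. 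Forcing the coefficient of $b_n$ to vanish gives $c_{n-1} = n\,c_{n-2}$, hence $c_n = (n+1)!$ after normalizing $c_0 = 1$, and one checks that the remaining terms are then automatically consistent. This leads me to set
\[
G_n = (-1)^{n+1} m^{n+1} (n+1)!, \qquad n \geq -1.
\]

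Next I would verify directly that this $\{G_n\}$ satisfies~\eqref{E:pn}: factoring $(-1)^{n+1} m^{n-1}$ out of $a_n G_{n-2} + b_n G_{n-1}$, the two $m\,b_n\,n!$ contributions cancel and the bracket collapses to $m^2 (n+1)!$, reproducing $G_n$ exactly. Since $G_0 = -m$ and $G_{-1} = 1$, Pincherle's theorem will deliver the limit $-G_0/G_{-1} = m$ once the growth condition is verified. I would also observe that $a_n \geq m + 2m^2 \geq 1$ and $b_n \geq 1$ for $n \geq 1$, so property~(i) applies with $k = \deg b_n$.

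It remains to prove $\lim_{n\to\infty} G_n/B_n = 0$, which is the only place the two branches of the hypothesis enter, and the step I expect to be the crux. Here $|G_n| = m^{n+1}(n+1)! = m^{n+1}(n+1)\,n!$. If $\deg b_n = k \geq 2$, then property~(i) gives $B_n \geq C_1(|D|/(1+\epsilon))^n (n!)^k$; cancelling one $n!$ against $|G_n|$ leaves $(n!)^{k-1}$ with $k-1 \geq 1$ in the denominator, and this factorial overwhelms every exponential, forcing the ratio to $0$. If instead $\deg b_n = 1$ with leading coefficient $D > m$, the single $n!$ in $B_n \geq C_1(D/(1+\epsilon))^n n!$ cancels against the $n!$ in $|G_n|$, leaving
\[
\frac{|G_n|}{B_n} \leq \frac{m(n+1)}{C_1}\left(\frac{m(1+\epsilon)}{D}\right)^n.
\]
Now the factorials no longer help, and convergence to $0$ depends entirely on choosing $\epsilon$ small enough that $m(1+\epsilon)/D < 1$ --- possible precisely because $D > m$ --- after which the geometric decay swamps the linear factor $(n+1)$. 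With $\lim_{n\to\infty} G_n/B_n = 0$ in both cases, Theorem~\ref{T:P*} yields the value $m$.
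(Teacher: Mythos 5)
Your proposal is correct and follows essentially the same route as the paper: the same auxiliary sequence $G_{n} = (-1)^{n+1}m^{n+1}(n+1)!$ plugged into Pincherle's theorem, with property~(i) supplying the growth bound on $B_{n}$. The paper's proof is just a terser version of yours---it states $G_{n}$ and cites (i) without writing out the verification of~\eqref{E:pn} or the two-case analysis (degree $\geq 2$ versus degree $1$ with $D > m$) that you carry out explicitly, and your choice of $\epsilon$ with $m(1+\epsilon)/D < 1$ is exactly the detail the paper leaves implicit.
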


\begin{proof}
Letting $G_{n} = (-1)^{n+1}m^{n+1}(n + 1)!$ for\, $n\geq -1$ and
$a_{n} = mnb_{n} + m^{2}n(n +1)$, for \,$ n \geq \, 1$, one has that
$ \{\,a_{n}\}_{n = 1}^{\infty}$,
$\{\,b_{n}\}_{n = 1}^{\infty}$ and
 $ \{\,G_{n}\}_{n = -1}^{\infty}$ satisfy equation~\eqref{E:pn}.
By (i),
$\lim_{n \to
\infty}\displaystyle{G_{n}/B_{n}}
 = 0 \Longrightarrow
\K{n =1}{\infty} \displaystyle{ \frac{a_{n}}{b_{n}}} = -G_{0}/G_{-1} = m$.
\end{proof}

\text{Theorem~\ref{T:P*}} does not say directly how to
find the value of all
polynomial continued fractions
 $K_{n =
1}^{\infty}\displaystyle{a_{n}/b_{n}}$ as it does not say
how the sequence ${G_{n}}$ can be found or even if such a sequence can
be found. However, Algorithm Hyper (see~\cite{PWZ96})  can be used to
determine if a hypergeometric solution ${G_{n}}$ exists to
equation~\eqref{E:pn}
and, if such a solution exists, the algorithm will out-put $G_{n}$,
enabling the limit of the
 continued fraction to be found, if ${G_{n}}$ satisfies $\lim_{n \to
 \infty}\displaystyle{G_{n}/B_{n}} = 0$.

Even if for the particular polynomial  sequences ${a_{n}}$ and ${b_{n}}$
it turns out that the sequence ${G_{n}}$ found does not satisfy $\lim_{n \to
 \infty}\displaystyle{G_{n}/B_{n}} = 0$, then these three sequences
${a_{n}}$, ${b_{n}}$ and ${G_{n}}$  may be used to find the value of
 infinitely many other continued fractions when ${G_{n}}$ is a
 polynomial or rational function in $n$.

The following proposition shows how, given any one solution of
(\ref{E:pn}), one can find the value of infinitely many other
polynomial continued fractions in an easy way.
\vspace{10pt}


\begin{proposition}\label{T:hype}
Suppose that there exists complex sequences
$ \{\,G_{n}\,\}_{n =-1}^{\infty}$,\\
$ \{\,a_{n}\,\}_{n = 1}^{\infty}$ and
$ \{\,b_{n}\,\}_{n = 1}^{\infty}$ satisfying
\begin{equation} \label{E:hyp2}
a_{n}G_{n-2} + b_{n}G_{n-1} - G_{n} = 0
\end{equation}
Let $f_{n}$ be any sequence,  let
$s_{n} = f_{n}G_{n-1} + a_{n}$ be such that $s_{n} \ne 0$, for $n \geq 1,$\,
and let \,\,$t_{n} = f_{n}G_{n-2} - b_{n}$ .
Let $A_{n}/B_{n}$ denote the convergents to
$\K{n = 1}{\infty}\displaystyle{\frac{s_{n}}{t_{n}}}$.
If $\lim_{n \to \infty}\displaystyle{G_{n}/B_{n}} = 0$
then  $\K{n = 1}{\infty}\displaystyle{\frac{s_{n}}{t_{n}}}$
converges and  its limit is $G_{0}/G_{-1}$.
\end{proposition}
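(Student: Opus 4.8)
The plan is to apply Pincherle's theorem (Theorem~\ref{T:P*}) directly to the continued fraction $\K{n=1}{\infty}\displaystyle{s_n/t_n}$. To do this I must exhibit a sequence $\{H_n\}_{n=-1}^{\infty}$ satisfying the three-term recurrence associated with the numerators $s_n$ and denominators $t_n$, namely
\[
H_n = s_n H_{n-2} + t_n H_{n-1},
\]
and for which $\lim_{n\to\infty} H_n/B_n = 0$. The entire difficulty lies in guessing the correct $H_n$; once it is found the rest is a short verification and a bookkeeping of subscripts.

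The key observation is that passing from the pair $(a_n,b_n)$ to $(s_n,t_n)$ effectively replaces $b_n$ by $-b_n$, apart from the $f_n$ contributions, so an alternating sign is needed. I therefore propose $H_n = (-1)^n G_n$. Substituting the definitions $s_n = f_n G_{n-1} + a_n$ and $t_n = f_n G_{n-2} - b_n$ into the proposed recurrence and cancelling the common factor $(-1)^n$, the identity to be checked becomes
\[
G_n = (f_n G_{n-1} + a_n)G_{n-2} - (f_n G_{n-2} - b_n)G_{n-1} = a_n G_{n-2} + b_n G_{n-1},
\]
where the two $f_n G_{n-1}G_{n-2}$ terms cancel. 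This last equation is exactly the hypothesis~\eqref{E:hyp2}, so $\{H_n\}$ satisfies the required recurrence for every $n\geq 1$.

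It then remains only to confirm the two conditions of Theorem~\ref{T:P*}. The numerators $s_n$ are nonzero for $n\geq 1$ by assumption, as the theorem demands. Moreover, since $|H_n| = |G_n|$ for all $n$, the hypothesis $\lim_{n\to\infty} G_n/B_n = 0$ immediately yields $\lim_{n\to\infty} H_n/B_n = 0$. Theorem~\ref{T:P*} then gives convergence of $\K{n=1}{\infty}\displaystyle{s_n/t_n}$ to the value $-H_0/H_{-1}$. Finally, reading off $H_0 = G_0$ and $H_{-1} = -G_{-1}$, I obtain $-H_0/H_{-1} = G_0/G_{-1}$, which is precisely the asserted limit. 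The only genuinely creative step is the choice of the alternating sign in $H_n = (-1)^n G_n$; everything else is a direct appeal to Pincherle's theorem.
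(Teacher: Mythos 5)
Your proof is correct and follows essentially the same route as the paper: the paper sets $G'_n = (-1)^{n+1}G_n$, verifies the recurrence $s_n G'_{n-2} + t_n G'_{n-1} - G'_n = 0$ via the same cancellation of the $f_n G_{n-1}G_{n-2}$ terms, and invokes Pincherle's theorem to get the limit $-G'_0/G'_{-1} = G_0/G_{-1}$. Your $H_n = (-1)^n G_n$ differs from the paper's choice only by an overall sign, which is immaterial since the recurrence is linear and the ratio $-H_0/H_{-1}$ is unchanged.
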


\begin{proof}
Let $G'_{n} =(-1)^{n+1}G_{n}$. Then
\begin{align*}
&s_{n}G'_{n-2} + t_{n}G'_{n-1} - G'_{n}
= (-1)^{n-1}(a_{n}G_{n-2} + b_{n}G_{n-1} - G_{n})
 \,\, = 0.
\end{align*}

Thus $ \{\,G'_{n}\,\}_{n =-1}^{\infty}$,
$ \{\,s_{n}\,\}_{n = 1}^{\infty}$ and
$ \{\,t_{n}\,\}_{n = 1}^{\infty}$ satisfy the conditions of
theorem~\ref{T:P*}  so $K_{n =
1}^{\infty}\displaystyle{s_{n}/t_{n}}$ converges and its
limit is $-G'_{0}/G'_{-1} = G_{0}/G_{-1}.$\\[5pt]
\end{proof}

\text{Entry 9} from the chapter on continued fractions in Ramanujan's
second notebook~\cite{B89}\,, pages 114-115, follows in the case $a$
is real and positive
and $x$ is real as a consequence of
the above proposition:


\begin{corollary}
Let $a$ be a real positive number and let  $x$ be a real number
 such that  $x \ne -ka$ where
k is a positive integer.
Then
\begin{equation*}
\frac{x+a+1}{x+1} =  \K{n =1}{\infty}
\displaystyle
{\frac{x + na}{x+(n-1)a -1}} .
\end{equation*}
\end{corollary}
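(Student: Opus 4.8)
The plan is to apply Proposition~\ref{T:hype} with a judicious choice of the auxiliary sequence $f_n$ so that the resulting $s_n/t_n$ matches the target continued fraction. The key observation is that the corollary's continued fraction has numerator $x+na$ (degree one in $n$) and denominator $x+(n-1)a-1$, so both $s_n$ and $t_n$ must be affine in $n$. This strongly suggests starting from the simplest possible solution of~\eqref{E:hyp2}, namely the constant recurrence data used implicitly in Corollary~\ref{C:Ram1}. Concretely, I would look for $G_n$, $a_n$, $b_n$ satisfying $a_nG_{n-2}+b_nG_{n-1}-G_n=0$ for which the products $f_nG_{n-1}+a_n$ and $f_nG_{n-2}-b_n$ collapse to the desired linear polynomials after choosing $f_n$ appropriately.

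First I would guess the underlying solution. Since the answer $(x+a+1)/(x+1)$ is a ratio of shifted linear factors, the natural candidate is $G_n=\prod_{i=0}^{n}(x+ia+\text{const})$ or a closely related product, paralleling the choice $G_n=(-1)^{n+1}\prod_{i=0}^{n}(m+i)$ made in Corollary~\ref{C:Ram1}. With such a product solution, the ratios $G_{n-1}/G_{n-2}$ and $G_n/G_{n-1}$ are themselves linear rational functions of $n$, so picking $f_n$ to be a suitable constant multiple of $1/G_{n-1}$ or a ratio of consecutive $G$'s should force $s_n$ and $t_n$ to be exactly $x+na$ and $x+(n-1)a-1$. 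Once $f_n$ is pinned down, verifying~\eqref{E:hyp2} and the formulas $s_n=f_nG_{n-1}+a_n$, $t_n=f_nG_{n-2}-b_n$ is a direct substitution. The value $G_0/G_{-1}$ should then evaluate to $(x+a+1)/(x+1)$ by telescoping the product.

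The remaining hypothesis to check is the convergence condition $\lim_{n\to\infty}G_n/B_n=0$, where $B_n$ are the denominator convergents of $\K{n=1}{\infty}s_n/t_n$. This is where the assumptions ``$a$ real positive'' and ``$x$ real'' enter: for large $n$ both $a_n=x+na$ and $b_n=x+(n-1)a-1$ are eventually positive and grow linearly, so property~(i) gives a lower bound $B_n\geq C_1(|D|/(1+\epsilon))^n n!$ with $D=a>0$, while the product $G_n$ grows only like a shifted factorial times $a^n$. Comparing these growth rates should yield $G_n/B_n\to 0$; the sign restrictions guarantee the terms are eventually positive so that property~(i) applies without the complications that negative $a_{n+1}$ would introduce.

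I expect the main obstacle to be identifying the correct $G_n$ and $f_n$ simultaneously: unlike the earlier corollaries where $G_n$ is handed to us, here both the solution of~\eqref{E:hyp2} and the transforming sequence $f_n$ must be reverse-engineered from the prescribed form of $s_n$ and $t_n$. The algebra is underdetermined at first glance, so the delicate step is recognizing which degree-of-freedom to fix (likely taking $f_n$ proportional to a ratio like $G_n/(G_{n-1}G_{n-2})$ or a constant times a reciprocal product) so that the two linear targets emerge consistently. After that choice, everything reduces to the routine substitution and the growth estimate described above.
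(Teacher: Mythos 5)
You correctly identify Proposition~\ref{T:hype} as the engine---that is exactly what the paper uses---but your ansatz for $G_n$ is wrong, and no choice of $f_n$ can repair it. The proof of Proposition~\ref{T:hype} shows that for \emph{any} admissible quadruple $(G_n,a_n,b_n,f_n)$, the sequence $G_n'=(-1)^{n+1}G_n$ must satisfy the one fixed recurrence $G_n'=s_nG_{n-2}'+t_nG_{n-1}'$ with the prescribed coefficients $s_n=x+na$, $t_n=x+(n-1)a-1$; the freedom in $(a_n,b_n,f_n)$ does not enlarge the set of admissible $G_n$. Moreover Pincherle demands $G_n/B_n\to0$, so $G_n'$ must be a \emph{minimal} solution of that recurrence. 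Since $B_n$ solves the same recurrence and grows at the factorial rate $\asymp a^n\,n!$ (roughly $\prod_{i\leq n}t_i$), a product of $n$ linear factors such as $\prod_{i=0}^{n}(x+ia+c)$---with or without the alternating sign of Corollary~\ref{C:Ram1}---grows at that same rate and can never satisfy $G_n/B_n\to0$; in fact one can check directly that no shift constant $c$ makes such a product solve the recurrence at all. Your own growth estimate betrays the problem: you lower-bound $B_n$ by $C_1\bigl(a/(1+\epsilon)\bigr)^n n!$ while conceding that your $G_n$ grows like $a^n n!$, and that ratio does not tend to zero---the inequality points the wrong way. The product ansatz succeeded in Corollary~\ref{C:Ram1} only because for \emph{that} recurrence (quadratic $a_n$, constant $b_n$) the factorial product happens to be minimal; for the present recurrence the minimal solution is linear in $n$, so any factorial-growth guess is doomed.

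The correct choice is far simpler, and a degree count suggests it: since $s_n=f_nG_{n-1}+a_n$ and $t_n=f_nG_{n-2}-b_n$ must be affine in $n$, try constant $f_n$, linear $G_n$, constant $a_n,b_n$. The paper takes $G_n=\bigl(x+(n+1)a+1\bigr)/(x+1)$, $f_n=x+1$, $a_n=-1$, $b_n=2$: equation~\eqref{E:hyp2} reduces to the vanishing of second differences of an arithmetic progression, $-G_{n-2}+2G_{n-1}-G_n=0$, and then $f_nG_{n-1}+a_n=x+na$, $f_nG_{n-2}-b_n=x+(n-1)a-1$, and $G_0/G_{-1}=(x+a+1)/(x+1)$. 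Convergence is now immediate: $G_n$ grows only linearly while $B_n$ grows at least exponentially on a tail where all partial quotients are positive. One further point you passed over too quickly: $x+(n-1)a-1$ can be negative for small $n$ (e.g.\ $n=1$ with $x<1$), and the growth properties (i)--(ii) require positivity from the start; the paper handles this by reducing to the case $x-1>0$, proving the identity for a tail of the continued fraction, and collapsing the finite head from the bottom up.
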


\begin{proof}
It is enough to prove this for $x-1 >0$
since for $n$ sufficiently large
 $x+(n-1)a-1>0$ and then the result will hold for
 a tail of the continued fraction and then resulting finite continued
fraction will collapse from the bottom up to give the result.
Let $G_{n} = \displaystyle{(x + (n+1)a+1)/(x+1)}$.
Put $f_{n} = x + 1,\,\, a_{n} = -1$ \,\, and \,\,
$b_{n} =  2$ so that
$a_{n}G_{n-2} + b_{n}G_{n-1} - G_{n} = 0$,  $x + na =
f_{n}G_{n-1}+a_{n}$ and $x+(n-1)a -1 = f_{n}G_{n-2}-b_{n}$.
Since $G_{n}$ is a degree 1 polynomial in $n$, $x+na,x+(n-1)a -1 > 0$
\, for  $n \, \geq \, 1$, it can easily be shown that
$\lim_{n \to \infty}\displaystyle{G_{n}/B_{n}} = 0$
and so by  Proposition~\ref{T:hype} the continued fraction converges to
 $G_{0}/G_{-1} = \displaystyle{(x + a+1)/(x+1)}$.
\end{proof}

\vspace{5pt}

\text{Remarks:}
(1)  In \text{Proposition~\ref{T:hype}} any polynomial $G_{n}$
satisfying   \eqref{E:hyp2}  can always be assumed to have
positive leading coefficient (if necessary multiply
\eqref{E:hyp2} by $-1$.)  If $f_{n}$ is then taken to be a polynomial
of sufficiently high degree
with leading positive coefficient then both $s_{n}$ and $t_{n}$ will
be polynomials with positive leading coefficients so that  there exists a
positive integer $N_{0}$ so that  for all  $n \geq N_{0}$,
$s_{n}$,  $t_{n} > 0$. If it happens that for some $m  \geq N_{0}$
that both $B_{m}$ and $B_{m+1}$ are of the same sign then $B_{n}$
will go to $+ \infty$ or $- \infty$ exponentially fast. In these
circumstances $\lim_{n \to
\infty}\displaystyle{G_{n}/B_{n}} = 0,$ since $G_{n}$
is only of polynomial growth.

In many of the following corollaries $f_{n}$ will be restricted so as to have
$N_{0}$ small (typically in the range $1 \leq  N_{0} \leq 3$), but of
course there are $f_{n}$ for which this is not the case but for which
the results claimed in the corollaries hold.

\vspace{10pt}
(2)  One approach is to take the polynomial $G_{n}$ as given and search for
polynomials $a_{n}$ and $b_{n}$ satisfying equation~\eqref{E:hyp2}.
It can be assumed  that
degree($a_{n}$), degree($b_{n}$)  $<$ degree($G_{n}$). This follows
since
if a solution exists with degree($a_{n}$) $\geq$ degree($G_{n}$) then
 the Euclidean algorithm can be used to write
$a_{n}= p_{n}G_{n-1}+a_{n}'$, $b_{n}= q_{n}G_{n-2}+b_{n}'$,
where $p_{n}$, $q_{n}$, $a_{n}'$ and $b_{n}'$ are polynomials in $n$.
Substituting into \eqref{E:hyp2} and comparing degrees gives that
\eqref{E:hyp2} holds with $a_{n}$ replaced with $a_{n}'$ and
$b_{n}$ replaced with $b_{n}'$.

\vspace{10pt}
(3)  In theory it is  possible to find polynomials $G_{n}$
of arbitrarily high degree and polynomials $a_{n}$ and $b_{n}$ of
lesser degree (with \emph{rational} coefficients)  satisfying
\eqref{E:hyp2}, by using \eqref{E:hyp2} to define equations expressing the
coefficients of $a_{n}$ and $b_{n}$ in terms of those of $G_{n}$.
If $G_{n}$ has degree $k$ and $a_{n}$ and $b_{n}$ both have degree
$k -1$, then \eqref{E:hyp2} is a polynomial identity of degree
$2k - 1$, giving $2k$ equations for the\, $2k$ coefficients of
$a_{n}$ and $b_{n}$.\footnote{Starting with $a_{n}$ and $b_{n}$,
arbitrary polynomials of a certain degree, it is possible to look for
solutions $G_{n}$ satisfying \eqref{E:hyp2}
with coefficients defined in terms of those of $a_{n}$ and $b_{n}$
using the  the Hyper Algorithm (see \cite{PWZ96}). However there is no
certainty that the solutions (if they exist)
will be polynomials or that they will have any particular desired
degree.}

In practice these equations and the requirement that the
coefficients of $G_{n}$ be integers introduces conditions on the
coefficients of $G_{n}$.
For example, if there exists $G_{n} = an^{2} + bn + c$,
 $a_{n} = dn + e$, and $b_{n} = fn + g$,
polynomials with integral coefficients,
satisfying \eqref{E:hyp2}  , then
{\allowdisplaybreaks
\begin{align}
d = -f &= \displaystyle{\frac{4a^{2}}{a^{2} - b^{2} + 4ac }}\,,\notag\\
    e &= \displaystyle{\frac{-3a^{2} + b^{2} +2ab-4ac }
			{a^{2} - b^{2} + 4ac }}\,,\notag\\
    g &= \displaystyle{\frac{12a^{2}-2ab-2b^{2}+8ac}
{a^{2} - b^{2} + 4ac }}\,,\notag
\end{align}
}
giving restrictions on the allowable values of $a$,  $b$ and $c$.
\vspace{2pt}

(Parts (ii) --(ix) of the following corollary correspond, respectively,
to the solutions
$\{a = b = m,c=1\}$,
$\{a = 1, b = 4,c=4\}$,
$\{a = m^{2},b = 3m^{2} - 2m,c=2m^{2} - 2m+1\}$,
$\{a = m^{2}, b = m^{2} + 2m,c=2m+1\}$,
$\{a = m, b = 3m,c=2m+1\}$,
$\{a = m, b = m - 2,c=-1\}$,
$\{a = m, b = 3m + 2,c=2m+3\}$ and
$\{a =  4m, b = 16m^{2} + 8m + 1, c = 16m^{3} + 16m^{2} + 5m + 1\}$  )

Proposition \ref{T:hype} is too general to easily calculate
the limit of particular polynomial continued fractions. The following
corollary enables
these limits to be calculated explicitly in many particular cases.


\begin{corollary}
 Let $m$ be a positive integer, $k$ a positive integer greater than $m$ and
$ \{\,f_{n} \}_{n =1}^{\infty}$  a non-constant polynomial
sequence such that $f_{n} \geq 1$, for $n \geq 1$. For each of
continued fractions below assume that $f_{n}$ is such that no
numerator partial quotient is equal to zero. (This
holds automatically in cases (i)  --(vi)).

\vspace{10pt}

\begin{flushleft}(i)\phantom{asdasdsadfdfdsf}
$\K{n =
1}{\infty}\displaystyle{\frac{(mn +k-m)f_{n}- 1}{(mn +k-2m)f_{n} - 2
}} =
\frac{k}{k-m}$.
\end{flushleft}

\vspace{1pt}

\vspace{1pt}

\begin{flushleft}
(ii)\phantom{asdasdsa}
$\K{n =
1}{\infty}\displaystyle{\frac{((n^{2} - n)m + 1)f_{n} + nm - 1 }
				{((n^{2} - 3n + 2)m + 1)f_{n} + mn -
(2m + 2) }} = 1$.
\end{flushleft}

\vspace{2pt}

\vspace{1pt}

\begin{flushleft}(iii)\phantom{asdasdsadfdfdsf}
$\K{n =1}{\infty}\displaystyle{\frac{ (n + 1)^{2}f_{n} + 4n + 5 }
			{n^{2}f_{n} +4n -4 }} = 4$.
\end{flushleft}

\vspace{10pt}

\vspace{1pt}

\begin{flushleft}
(iv) $\K{n =1}{\infty}
\displaystyle{\frac{f_{n}(m^{2}n^{2} + n(m^{2} - 2m) + 1) + mn + m - 2 }
{f_{n}(m^{2}n^{2} - n(m^{2} + 2m) + 2m + 1) + mn - m -3  }}
= 2m^{2} - 2m+ 1$.
\end{flushleft}

\vspace{10pt}

\vspace{5pt}

\small{
\begin{flushleft}
(v) $\K{n =1}{\infty}
\displaystyle{
\frac{f_{n}(m^{2}n^{2} + n(2m - m^{2}) + 1) + mn }
{f_{n}(m^{2}n^{2} - n(3m^{2} - 2m) + + 2m^{2} + 2m + 1) + mn - 2m -1}
}~=~2m+~1$.
\end{flushleft}}

\vspace{10pt}

\vspace{1pt}

\begin{flushleft}
(vi)\phantom{asdasdsadfdfdsf}$
\K{n =1}{\infty}
\displaystyle{\frac{f_{n}(n(n + 1)m + 1) + mn + m - 1 }
{f_{n}(n(n - 1)m + 1) + mn - m - 2}}
= 2m+ 1$.
\end{flushleft}

\vspace{10pt}

\begin{flushleft}
(vii)
Let $\displaystyle{ A_{n}/B_{n}}$ denote the convergents to the
continued fraction  below
and suppose   $\lim_{n \to
\infty}\displaystyle{n^{2}/B_{n}} = 0$ . Then

\vspace{1pt}

\begin{equation*}
\K{n =1}{\infty}
\displaystyle{\frac{f_{n}((n - 1)^{2}m + (m - 2)(n - 1) - 1) - m^{2}n + m - 1 }
{f_{n}((n - 2)^{2}m + (m - 2)(n - 2) - 1) - m^{2}(n - 2) + m - 2}}
= -1.
\end{equation*}
\end{flushleft}

\vspace{10pt}

\vspace{1pt}

\begin{flushleft}(viii)\phantom{asdasdsa}
$\K{n =1}{\infty}
\displaystyle{\frac{f_{n}(n(n + 1)m + 2n + 1) - (m^{2}(n + 1) + m + 1) }
{f_{n}(n(n - 1)m + 2n - 1) - (m^{2}(n - 1) + m + 2)}}
= 2m+ 3$.
\end{flushleft}

\vspace{10pt}

\begin{flushleft}
(ix) $\K{n =1}{\infty}
\displaystyle{\frac{
\begin{matrix}
-1 - 8\,m - 32\,m^2 - 128\,m^3 - 64\,m^2\,n +
\phantom{asddfdsfgsdfsdfsdfdgh} \\
 \phantom{asddffdfdsfgsdfsdfdgh}
\left( m + 16\,m^3 + \left( 1 + 16\,m^2 \right) \,n +
      4\,m\,n^2 \right) \,f_{n}
\end{matrix}}
{
\begin{matrix}
&-2 - 8\,m + 96\,m^2 - 128\,m^3 - 64\,m^2\,n +
\phantom{asddgfdgfdgsdfdsfggdsfsdh} \\
& \phantom{asdggh}   ( -1 + 5\,m - 16\,m^2 + 16\,m^3  +   \left( 1 - 8\,m + 16\,m^2 \right) \,n + 4\,m\,n^2
      ) \,f_{n}
\end{matrix}}}$
\end{flushleft}
\[=
\frac{1 + 5\,m + 16\,m^2 + 16\,m^3}{m + 16\,m^3}
\]
\end{corollary}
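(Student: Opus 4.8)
The plan is to prove all nine parts by a single uniform application of Proposition~\ref{T:hype}. For each case I would exhibit an explicit polynomial $G_n$ together with polynomials $a_n$ and $b_n$ satisfying the recurrence \eqref{E:hyp2}, chosen so that the displayed numerator and denominator are exactly $s_n = f_n G_{n-1} + a_n$ and $t_n = f_n G_{n-2} - b_n$. Once \eqref{E:hyp2} is verified and the condition $\lim_{n\to\infty} G_n/B_n = 0$ is established, Proposition~\ref{T:hype} delivers the limit as $G_0/G_{-1}$.

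For part (i), which is the only degree-one case, I would take $G_n = mn + k$, $a_n = -1$, and $b_n = 2$. A direct check gives $a_n G_{n-2} + b_n G_{n-1} - G_n = -(mn+k-2m) + 2(mn+k-m) - (mn+k) = 0$; then $s_n = (mn+k-m)f_n - 1$ and $t_n = (mn+k-2m)f_n - 2$ match the statement, and $G_0/G_{-1} = k/(k-m)$ since $G_0 = k$ and $G_{-1} = k - m$.

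For parts (ii)--(ix) I would use the quadratic $G_n = an^2 + bn + c$ with the parameter values listed in Remark (3), and the corresponding $a_n = dn + e$, $b_n = fn + g$ read off from the formulas for $d$, $e$, $g$ given there. Verifying \eqref{E:hyp2} is then a routine polynomial identity in $n$ and $m$, and expanding $f_n G_{n-1} + a_n$ and $f_n G_{n-2} - b_n$ reproduces the displayed numerators and denominators. The limit $G_0/G_{-1}$ follows from $G_0 = c$ and $G_{-1} = a - b + c$: in each of (ii)--(viii) a short computation gives $G_{-1} = 1$, while in (ix) one gets $G_{-1} = m + 16m^3$, producing the stated values.

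The main obstacle is the convergence condition $\lim_{n\to\infty} G_n/B_n = 0$, since $G_n$ grows only polynomially and one must show $B_n$ outgrows it. In every case $s_n$ and $t_n$ are polynomials in $n$ whose top term (coming from $f_n G_{n-1}$, respectively $f_n G_{n-2}$) has positive leading coefficient, so $s_n, t_n > 0$ for all $n \geq N_0$ for some $N_0$. I would then invoke the sign argument of Remark (1): with $s_n, t_n > 0$ the recurrence $B_n = t_n B_{n-1} + s_n B_{n-2}$ drives $|B_n|$ to infinity at least exponentially once a consecutive same-sign pair appears, so the polynomial $G_n$ is dominated and $G_n/B_n \to 0$. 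Part (vii) instead assumes this limit outright. Handling the small-$n$ regime, where $s_n$ or $t_n$ may be negative, cleanly enough to guarantee the eventual sign pattern is the only nonroutine point; the identities \eqref{E:hyp2} and the matching of numerators and denominators are mechanical.
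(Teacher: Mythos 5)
Your proposal is correct and is essentially the paper's own proof: part (i) uses exactly the paper's data $G_n = mn+k$, $a_n=-1$, $b_n=2$, and for (ii)--(ix) the recipe you describe --- plugging the parameter triples from Remark (3) into the formulas for $d$, $e$, $g$ --- reproduces precisely the polynomials $G_n$, $a_n$, $b_n$ that the paper lists case by case (e.g.\ for (iv) it yields $G_n = m^2n^2+n(3m^2-2m)+2m^2-2m+1$, $a_n = mn+m-2$, $b_n=-mn+m+3$), and the limits $G_0/G_{-1}$ come out as you compute them, with $G_{-1}=1$ in (ii)--(viii) and $G_{-1}=m+16m^3$ in (ix). The one point where you diverge is the step you yourself flag as nonroutine: you propose to apply Remark (1) to the denominator convergents $B_n$ of the \emph{full} continued fraction, which requires a consecutive same-sign pair $B_m, B_{m+1}$ to appear after the index $N_0$ where $s_n, t_n$ become positive --- and when early partial quotients are negative this is not automatic (the signs of $B_n$ could in principle keep alternating). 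The paper closes exactly this gap with a tail argument: pass to the tail $\K{n=N_0}{\infty} s_n/t_n$, whose partial quotients are all positive, so that \emph{its} denominator convergents are positive from the start and grow exponentially; Pincherle's theorem (via Proposition \ref{T:hype} applied with the shifted sequence $G_{N_0+n}$) then gives convergence of the tail, and the finitely many initial terms are recovered by collapsing the continued fraction from the bottom up, which propagates the value to $G_0/G_{-1}$. You should replace your "wait for a same-sign pair" step with this tail-and-collapse device; with that substitution your argument coincides with the paper's, including the separate treatment of (vii), where both you and the paper simply assume $\lim_{n\to\infty} n^2/B_n = 0$.
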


\begin{proof}
In each case below an easy check shows that with the given choices for
$ \{\,G_{n}\,\}_{n =-1}^{\infty}$,
$ \{\,a_{n}\,\}_{n = 1}^{\infty}$ and
$ \{\,b_{n}\,\}_{n = 1}^{\infty}$ that
equation~\eqref{E:hyp2}  holds, that the continued fraction in
question corresponds to the continued fraction
$\K{n = 1}{\infty}\displaystyle{\frac{s_{n}}{t_{n}}}$
of proposition~\eqref{T:hype},
 that if $\{A_{n}/B_{n}\}$ are the convergents to this continued
fraction then $\lim_{n \to \infty}\displaystyle{G_{n}/B_{n}} = 0$
 and that (by fact or assumption) no $s_{n}
= 0$.
Finally, the limit of the continued fraction is $G_{0}/G_{-1}$. The
fact that some early partial quotients may be negative does affect
any of the results - a tail of the continued fraction will have all
terms positive so that $\lim_{n \to \infty}\displaystyle{G_{n}/B_{n}}
= 0$ will hold for the tail which will then converge and the continued
fraction will then collapse from the bottom up to give the result.

Remark: In some cases the result holds if $f_{n}$ is a \emph{constant}
polynomial such that $f_{n} \geq 1$ for $n \geq 1$.

(i) Let $ G_{n} = mn + k$,
$ a_{n} = -1$,\,and \, $b_{n} = 2$.

\vspace{5pt}

(ii) Let $ G_{n} = n(n + 1)m + 1$,
$ a_{n} = mn  - 1$\,\, and \,\,  $b_{n} = -mn + (2m + 2) $.

\vspace{5pt}

(iii) Let $ G_{n} = (n + 2)^{2}$,   $ a_{n} = 4n + 5$ and $b_{n} = -4n + 4$ .

\vspace{5pt}

(iv) Let
$ G_{n} = m^{2}n^{2} + n(3m^{2} - 2m) + 2m^{2} - 2m+ 1$,
$a_{n}$$ = mn + m - 2$ and $b_{n} = -mn + m + 3$.

\vspace{5pt}

(v)   Let
$ G_{n} = n(n + 1)m^{2} + 2m(n + 1) + 1$,
$a_{n} = mn$ and $b_{n} = -mn + 2m + 1$.

\vspace{5pt}

(vi) Let
$ G_{n} = (n + 2)(n + 1)m  + 1$,
$a_{n} = mn + m - 1$ and $b_{n} = -mn + m + 2$.

\vspace{5pt}

(vii) Let
$ G_{n} = mn^{2} + (m - 2)n  - 1$,
$a_{n} = -m^{2}n + m - 1$ and $b_{n} = m^{2}n - 2m^{2} - m + 2$.

(viii) Let
$ G_{n} = (n + 2)(n + 1)m + 2n + 3$,
$a_{n} = -(m^{2}n + m^{2} + m + 1)$, and $b_{n} = m^{2}(n - 1) + m +
2$.

(ix) Let
$ G_{n}=1 + 5\,m + 16\,m^2 + 16\,m^3 +
   \left( 1 + 8\,m + 16\,m^2 \right) \,n + 4\,m\,n^2$,
$a_{n}=-1 - 8\,m - 32\,m^2 - 128\,m^3 - 64\,m^2\,n$ and
$b_{n} = 2 + 8\,m - 96\,m^2 + 128\,m^3 + 64\,m^2\,n$.

\end{proof}

Examples:\\
1) Letting $m = 5$ and $f_{n} = 10n^{8}$ in (ii) above gives

\vspace{5pt}

\[\K{n =
1}{\infty}\displaystyle{\frac{((n^{2} - n)5 + 1)10n^{8} + 5n - 1}
{((n^{2} - 3n + 2)5 + 1)10n^{8} + 5n - 12}} = 1.
\]\label{Ex:ex6}

\vspace{10pt}

2) Also in (ii), letting $f_{n}=n^{8}$ and $m$ be an arbitrary positive
  integer,
\[\K{n =1}{\infty}\displaystyle{\frac{((n^{2} - n)m + 1)n^{8} + nm-1}
{((n^{2} - 3n + 2)m + 1)n^{8} + mn - (2m + 2)}} = 1
.
\]\label{Ex:ex7}

\vspace{10pt}

3) Letting  $f_{n} = 2n^{5} $ and $ m = 3 $  in (vi) above gives
\eqref{Ex:eex10}
in the introduction. Similarly, letting $f_{n}=n^{10}$ gives
\eqref{Ex:eex8} in the introduction.

\vspace{10pt}

4) In (vii) above a general class of examples may be obtained by choosing
$m > 1$ and
$f_{n} > nm^{2}$ for $n \geq 1$. With the notation of the proposition
it can easily be seen that $s_{n}, t_{n} \geq 1 $ for $ n \geq 3$.
If $f_{n}$ is such that $B_{2}$ and $B_{3}$ are negative, then $B_{n}$
will be negative for all $n \geq 2$ and by a similar argument to the
reasoning behind condition (ii), it will follow that $\lim_{n \to
\infty}\displaystyle{n^{2}/B_{n}} = 0$ and the conditions of the
corollary will be satisfied. For example, letting $f_{n} = 16n$ and $m = 3$
gives that

\begin{align}
&\K{n =1}{\infty}\displaystyle
{\frac{48n^{3} - 80n^{2} + 7n +2}
{48n^{3} - 176n^{2} + 135n + 19}} = -1.\notag
\end{align}\label{Ex:ex12}

\vspace{5pt}

All the examples in the last corollary were derived from
solutions to equation~\eqref{E:hyp2} where $G_{n}$ had degree $2$.
Table \ref{Ta:t1} below gives several families of solutions to
equation \eqref{E:hyp2}, where $G_{n}$ is of degree 3 in $n$.
\vspace{2pt}

\begin{table}[ht]
  \begin{center}
	\begin{tabular}{| c | c | c | }
	\hline
	$G_{n}$ & $a_{n}$ & $b_{n}$  \\ \hline
	$(n^{2^{}} - 1)mn +1$ & $2mn(n-1) -1$ & $-2m(n^{2} -4n +3) +2$ \\ \hline
	$(2n^{2}+3n+1)mn +1$ & $-2n^{2}m(m-2) +m^{2}n $ &
               $-n^{2}(4m -2m^{2})$ \\
          {} & $+m-1$         &  $-n(7m^{2}-12m)$ \\
          {} &  {}            &       $+6m^{2}-7m+2$ \\ \hline
        $(n^{2}+3n+2)mn+1$    & $2n(n+1)m-1$ & $-2n(n-2)m+2$ \\ \hline
        $(n^{3}+6n^{2}+11n+6)m$ &$2mn^{2}+6mn$
                         & $-2mn^{2}+2m+2$ \\
		$+1$     &	$+4m-1$	&{}	\\\hline
	$mn^{3}+3mn^{2}$ & $-m^{2}n^{2}-n(m^{2}+m)$
		&$m^{2}n^{2}-n(2m^{2}-m)$ \\
        $+n(2m-3)-2$&$+m-1$	&$-4m+2$ \\\hline
	\end{tabular}
\phantom{asdf}\\
	\caption{Some infinite families of solutions to
        \eqref{E:hyp2} for $ G_{n}$ of degree 3.}\label{Ta:t1}
    \end{center}
\end{table}

Considering the third  and fourth row of entries in the
Table \ref{Ta:t1}, for example,
there is the following corollary to
\text{Proposition~\ref{T:hype}}:


\begin{corollary}\label{C:c12}
Let $f_{n}$ be a polynomial in
$n$ such that $f_{n}\geq 1$ for  $ n\geq 1$ and let $m$ be a
positive integer.

(i) If  $f(2) > 2$ then

\vspace{5pt}

\[
\K{n =1}{\infty}
\displaystyle{\frac{f_{n}((n^{2}-1)nm+1)+ 2mn(n+1)-1 }
{f_{n}((n^{2}-3n+2)mn +1) + 2mn(n-2) - 2}}
= 1.
\]

\vspace{5pt}

(ii)
\[
\K{n =1}{\infty}
\displaystyle{\frac{f_{n}((n^{2}+3n+2)nm+1)+ 2mn^{2}+6mn+4m-1 }
{f_{n}((n^{2}-1)nm+1) +2(n^{2}-1)m-2}}
= 6m+1.
\]

\end{corollary}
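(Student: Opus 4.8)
The plan is to read both continued fractions as instances of Proposition~\ref{T:hype}, taking for the triple $(G_n,a_n,b_n)$ the third row of Table~\ref{Ta:t1} in part~(i) and the fourth row in part~(ii). Concretely, in part~(i) I would set $G_n=(n^2+3n+2)nm+1=n(n+1)(n+2)m+1$, $a_n=2n(n+1)m-1$, $b_n=-2n(n-2)m+2$, and in part~(ii) I would set $G_n=(n^3+6n^2+11n+6)m+1=(n+1)(n+2)(n+3)m+1$, $a_n=2mn^2+6mn+4m-1$, $b_n=-2mn^2+2m+2$. The first step is the mechanical verification that each triple satisfies equation~\eqref{E:hyp2}, namely $a_nG_{n-2}+b_nG_{n-1}-G_n=0$; this is a polynomial identity in $n$ (with $m$ a parameter) that expands to zero.

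Next I would substitute to obtain $G_{n-1}$ and $G_{n-2}$ and form $s_n=f_nG_{n-1}+a_n$ and $t_n=f_nG_{n-2}-b_n$, checking that they reproduce the numerator and denominator partial quotients as written; in part~(i), for instance, $G_{n-1}=(n^2-1)nm+1$ and $G_{n-2}=(n^2-3n+2)nm+1$, giving exactly the displayed fraction, and part~(ii) is entirely analogous. The asserted value is $G_0/G_{-1}$, which the factored forms make immediate: in both parts $G_{-1}=1$ (the product $G_{-1}$ contains a vanishing factor), while $G_0=1$ in part~(i) and $G_0=6m+1$ in part~(ii), producing the limits $1$ and $6m+1$ respectively.

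The substantive step, and the expected main obstacle, is verifying the two hypotheses of Proposition~\ref{T:hype}: that $s_n\ne 0$ for all $n\ge 1$, and that $\lim_{n\to\infty}G_n/B_n=0$. The first is automatic, since $f_nG_{n-1}>0$ and $a_n>0$ for every $n\ge 1$ (one checks $a_n\ge 4m-1$ in part~(i) and $a_n\ge 12m-1$ in part~(ii)), whence $s_n=f_nG_{n-1}+a_n>0$. For the second I would first locate where the denominator partial quotients become positive: in part~(i) one computes $t_2=f(2)-2$, which is precisely where the hypothesis $f(2)>2$ is needed, and $(n^2-3n+2)=(n-1)(n-2)>0$ forces $t_n>0$ for $n\ge 3$, whereas in part~(ii) the factorizations already give $t_n>0$ for $n\ge 2$ with no extra hypothesis. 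Once $s_n,t_n\ge 1$ from some index on, the denominator growth estimate~(i) preceding Corollary~\ref{C:c3} applies: because $t_n$ has degree $\deg f_n+3\ge 4$ with positive leading coefficient, the $|B_n|$ grow at least like a constant times $(n!)^{k}$ with $k\ge 4$, which swamps the cubic growth of $G_n$ and yields $\lim_{n\to\infty}G_n/B_n=0$. Proposition~\ref{T:hype} then gives convergence to $G_0/G_{-1}$; as in the other corollaries of this section, the finitely many leading partial quotients, some possibly negative, are absorbed when the continued fraction collapses from the bottom up and so do not affect the value.
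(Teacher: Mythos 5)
Your proposal is correct and takes essentially the same approach as the paper: both parts are read off from the third and fourth rows of Table~\ref{Ta:t1} via Proposition~\ref{T:hype}, with numerator $f_nG_{n-1}+a_n$, denominator $f_nG_{n-2}-b_n$, and limit $G_0/G_{-1}$ equal to $1$ and $6m+1$ respectively. The only immaterial difference is in verifying $\lim_{n\to\infty}G_n/B_n=0$: the paper checks that $B_2$ and $B_3$ are positive and invokes its earlier remark on exponential growth of the $B_n$, whereas you apply the factorial growth estimate (i) to a positive tail and collapse the finitely many initial partial quotients from the bottom up --- the same device the paper uses in its preceding corollary.
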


\vspace{2pt}
\begin{proof}
(i) In the light of the fact that $G_{n}$,\,$a_{n}$ and  $b_{n}$ satisfy
\eqref{E:hyp2} simply note that the numerator of the continued
fraction is $ f_{n}G_{n-1} + a_{n}$ and that the denominator is
$f_{n}G_{n-2} - b_{n}$. It is easily seen that $a_{n} \geq 1$, for all
$n \geq 1$ and that $b_{n} \geq 1$, for all
$n \geq 2$. It can also be shown that $B_{2}$ and $B_{3}$ are positive
for all $m$ and $f_{n}$ satisfying the conditions of the corollary.
In the light of what was said in an earlier remark this is sufficient
to ensure the result.

(ii) The proof of this follows  the same lines as that of (i) above.
\end{proof}

Taking  $f_{n}$ to be  $ n^{3} $ and $ m = 3 $ in part (i)
 gives \eqref{Ex:eex13}
in the introduction.

One could continue to prove similar results by finding other solutions
to equation~\eqref{E:hyp2} for degrees $2$ or $3$ or by going to
higher degrees, but these corollaries should be sufficient to
illustrate the principle at work.

\vspace{5pt}

\section{Infinite Polynomial Continued Fractions with Irrational Limits}
In this section we use a continued fraction-to-series
transformation equivalent to Euler's transformation to sum some
polynomial continued fractions with irrational limits.
\vspace{5pt}


\begin{theorem}\label{T:t4}
For  $N$ \,$\geq 1$
\begin{equation}\label{E:pn3}
b_{0} +
\K{n = 1}{N}\displaystyle{\frac{b_{n-1}x}{b_{n}-x}} =
\displaystyle{\frac{1}{\J{n = 0}{N}
\displaystyle{\frac{(-1)^{n}x^{n}}{\prod_{i = 0}^{n}
b_{i}}}}}.
\end{equation}
Thus, when $N \rightarrow \infty$,  the continued fraction converges
if and only if the series converges.
\end{theorem}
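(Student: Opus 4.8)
The plan is to prove, by downward induction on a starting index $k$, a one–parameter family of ``tail'' identities that specializes to the theorem at $k=0$. For $0\le k\le N$ set
\[
G_k = b_k + \K{n=k+1}{N}\frac{b_{n-1}x}{b_n-x},
\qquad
\sigma_k = \sum_{n=k}^{N}\frac{(-1)^{n-k}x^{n-k}}{\prod_{i=k}^{n}b_i},
\]
with the conventions that an empty continued fraction (the case $k=N$) equals $0$ and an empty product equals $1$. The statement of the theorem is exactly the case $k=0$, since then $G_0 = b_0 + \K{n=1}{N}\frac{b_{n-1}x}{b_n-x}$ and $\sigma_0$ is the series appearing on the right of \eqref{E:pn3}. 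So I would reduce everything to showing $G_k\,\sigma_k = 1$ for each $k$.

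The base case $k=N$ is immediate: $G_N = b_N$ and $\sigma_N = 1/b_N$, so $G_N\sigma_N = 1$. For the inductive step I would peel the top partial quotient off $G_k$. The first denominator of the tail continued fraction is $b_{k+1}-x$, and everything below it reassembles into $G_{k+1}-x$, which yields the clean one–step recursion
\[
G_k = b_k + \frac{b_k x}{G_{k+1}-x}.
\]
Assuming the inductive hypothesis $G_{k+1}=1/\sigma_{k+1}$, I would substitute and clear denominators; a short computation gives $G_k = b_k/(1-x\sigma_{k+1})$, hence $1/G_k = \tfrac{1}{b_k} - \tfrac{x}{b_k}\sigma_{k+1}$. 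A single index shift identifies $-\tfrac{x}{b_k}\sigma_{k+1}$ with $\sum_{n=k+1}^{N}\frac{(-1)^{n-k}x^{n-k}}{\prod_{i=k}^{n}b_i}$, and adjoining the $n=k$ term $1/b_k$ shows $1/G_k = \sigma_k$, closing the induction. Setting $k=0$ then gives $b_0 + \K{n=1}{N}\frac{b_{n-1}x}{b_n-x} = 1/\sigma_0$, which is \eqref{E:pn3}.

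The convergence equivalence requires no extra work once the finite identity is in hand. Writing $F_N$ for the left side of \eqref{E:pn3} and $\sigma_0^{(N)}$ for the $N$th partial sum of the series, the identity says $F_N\,\sigma_0^{(N)}=1$ for every finite $N$; since the two sequences are exact reciprocals, the partial sums converge to a finite nonzero limit precisely when $F_N$ does, and the limits are then reciprocal, so the continued fraction converges exactly when the series does. I expect the only genuine decision in the whole argument to be guessing the correctly normalized tail sum $\sigma_k$ (in particular the placement of $b_k$ in the leading product); after that, the recursion $G_k = b_k + b_k x/(G_{k+1}-x)$ and the index shift are routine bookkeeping. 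The one point deserving a remark is the degenerate boundary behaviour in the convergence claim (limit value $0$ or $\infty$), which the reciprocal relation handles but which should be stated with the understanding that ``converges'' means converges to a finite nonzero value.
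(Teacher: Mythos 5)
Your induction is correct: the base case $k=N$, the peeling recursion $G_k = b_k + b_kx/(G_{k+1}-x)$, the computation $1/G_k = 1/b_k - (x/b_k)\sigma_{k+1} = \sigma_k$, and the index shift all check out, and the case $k=0$ is precisely \eqref{E:pn3}. The comparison with the paper is lopsided, however, because the paper does not prove Theorem~\ref{T:t4} at all --- its ``proof'' is a citation to Chrystal's \emph{Algebra} (page 516, equation (14)), in the same spirit as the introduction's remark that the Euler transformation \eqref{E:eul} ``is easily proved by induction.'' So what your argument buys is self-containedness: a downward induction on the tail index that establishes the identity at the level of $N$th approximants, which is exactly the right form for the second assertion, since equality of the $N$th approximant with the reciprocal of the $N$th partial sum makes the convergence equivalence immediate (the same ``equal approximants'' mechanism the paper invokes for \eqref{E:eul}). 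Two of your side remarks are worth keeping: the identity is really an identity of rational functions, so the nondegeneracy assumptions (no vanishing $G_{k+1}-x$, no vanishing partial sum) are the same ones implicitly needed for the theorem's statement to make sense; and the caveat about limits equal to $0$ or $\infty$ is genuine --- as literally stated, ``converges if and only if converges'' fails when one side tends to $0$, a point neither the paper nor its cited source flags, and your reading of ``converges'' as ``converges to a finite nonzero value'' is the honest fix.
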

\begin{proof}
See, for example, Chrystal~\cite{gC22}, page 516, equation (14).
\end{proof}

\text{Remark:}
The irrationality criterion mentioned in the introduction means that if
$\{b_{n}\}_{i = 0}^{\infty}$ is a sequence of  integers, then
$\sum_{n = 0}^{\infty}
\displaystyle{(-1)^{n}x^{n}/b_{0}b_{1}\cdots
b_{n}}$ is not rational for $x = 1/m$, $m$  being a non-zero integer,
provided
 $|mb_{n}-1|\,\, \geq \,\,|mb_{n-1}| + 1$, for all $n$ sufficiently large.


{\allowdisplaybreaks
\begin{corollary}\label{C:3c2}
For all non-zero integers $m$ (and indeed for all non-zero real numbers $m$ )
\begin{align*}
&(i) \,\,\,\, \displaystyle{ 6m^{2} - 1 + \K{n = 1}{\infty}
\frac{m^{2}(4n^{2} + 2n)}{ m^{2}(4n^{2} + 10n + 6) -1}
= \frac{1}
   {\frac{1}{m} \csc(\frac{1}{m}) - 1 }}.
\end{align*}
\end{corollary}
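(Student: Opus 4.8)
The plan is to realize the left-hand side as a single instance of the Euler-type transformation in Theorem~\ref{T:t4}. That theorem evaluates $b_0 + \K{n=1}{\infty}\frac{b_{n-1}x}{b_n - x}$ as the reciprocal of the series $\sum_{n=0}^{\infty}(-1)^n x^n/\prod_{i=0}^n b_i$, so the entire problem reduces to choosing $x$ and a sequence $\{b_n\}$ for which the partial numerators and denominators of the given continued fraction coincide with $b_{n-1}x$ and $b_n - x$. Since the finite-$N$ form of Theorem~\ref{T:t4} is a purely algebraic identity, this matching step carries no analytic baggage.

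First I would reverse-engineer $x$ and $\{b_n\}$. Taking $x=1$, the denominator condition $b_n - 1 = m^2(4n^2+10n+6)-1$ forces $b_n = m^2(4n^2+10n+6) = 2m^2(n+1)(2n+3)$, and then the numerator condition $b_{n-1} = m^2(4n^2+2n) = 2m^2 n(2n+1)$ is automatic, because replacing $n$ by $n-1$ in $2(n+1)(2n+3)$ returns exactly $2n(2n+1)$. The one point to flag carefully is the constant term: with $x=1$ the theorem produces leading term $b_0 = 2m^2(1)(3) = 6m^2$, whereas the corollary displays $6m^2 - 1$. Hence the corollary's left-hand side is precisely (the value given by Theorem~\ref{T:t4}) minus $1$, and this offset of $1$ must be carried to the end.

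Next I would evaluate the telescoping product and recognize the series. A short induction gives $\prod_{i=0}^n b_i = m^{2n+2}(2n+3)!$, so the series appearing in Theorem~\ref{T:t4} is $\sum_{n=0}^{\infty}(-1)^n/\bigl(m^{2n+2}(2n+3)!\bigr)$. Reindexing the expansion $\sin z = \sum_{k\ge 0}(-1)^k z^{2k+1}/(2k+1)!$ by $k = n+1$ shows $\sum_{n\ge 0}(-1)^n z^{2n+3}/(2n+3)! = z - \sin z$; dividing by $z$ and setting $z = 1/m$ identifies our series as $1 - m\sin(1/m)$. Thus Theorem~\ref{T:t4} gives $6m^2 + \K{n=1}{\infty}\frac{b_{n-1}}{b_n-1} = 1/(1 - m\sin(1/m))$, and subtracting the offset yields the corollary's left-hand side as $\frac{1}{1-m\sin(1/m)} - 1 = \frac{m\sin(1/m)}{1-m\sin(1/m)}$.

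Finally I would match this to the stated right-hand side by the elementary rewriting $\frac{1}{m}\csc(1/m) - 1 = \frac{1}{m\sin(1/m)} - 1 = \frac{1 - m\sin(1/m)}{m\sin(1/m)}$, whose reciprocal is exactly $\frac{m\sin(1/m)}{1-m\sin(1/m)}$. Convergence comes for free, since Theorem~\ref{T:t4} makes the continued fraction converge precisely when the series does, and $\sum(-1)^n/\bigl(m^{2n+2}(2n+3)!\bigr)$ converges absolutely for every nonzero $m$, real or complex — which is exactly why the conclusion extends past integer $m$. I expect the only real obstacle to be the reverse-engineering step: spotting that $x=1$ with a quadratic $b_n$ works, and then noticing the $6m^2$ versus $6m^2-1$ discrepancy. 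Once $\prod_{i=0}^n b_i$ is in closed form, the identification with $\sin$ and the closing algebra are routine.
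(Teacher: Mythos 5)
Your proposal is correct and takes exactly the paper's route: the paper's entire proof is the single line ``In Theorem~\ref{T:t4} let $b_{n} = (2n+2)(2n+3)m^{2}$ and $x=1$,'' which is precisely your choice $b_n = 2m^2(n+1)(2n+3)$ with $x=1$. The details you supply --- the product $\prod_{i=0}^{n} b_i = m^{2n+2}(2n+3)!$, the identification of the series with $1 - m\sin(1/m)$, and the careful handling of the offset between $b_0 = 6m^2$ and the stated leading term $6m^2 - 1$ --- are exactly the steps the paper leaves to the reader.
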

}

\text{Remarks:}\\
(1) Glaisher, \cite {G74} states continued fraction expansions
essentially equivalent to this one and the one in the next corollary
.\\
(2) The irrationality criterion gives that $\sin (\frac{1}{m})$
is irrational for $m$ either a non-zero integers or the square-root of
a positive integer.

\begin{proof}
(i) In \text{Theorem~\ref{T:t4}}
let  $b_{n} = \displaystyle{(2n + 2)(2n + 3)m^{2}} $ and $x=1$. \\
\end{proof}


\begin{corollary}\label{C:3c3}
For all non-zero integers $m$ (and indeed for all non-zero real numbers $m$ )
\begin{align*}
&(i)\,  \displaystyle{ 2m^{2} + \K{n = 1}{\infty}
\frac{m^{2}(4n^{2} - 2n)}{ m^{2}(4n^{2} + 6n + 2) -1}
= \frac{1}
   {1 - \cos (\frac{1}{m}) }}.
\end{align*}
\end{corollary}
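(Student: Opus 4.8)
The plan is to apply Theorem~\ref{T:t4} directly, in exact parallel with the proof of Corollary~\ref{C:3c2}, the only change being the index shift in the factorials that turns sine into cosine. First I would put $x = 1$ and make the specific choice $b_n = (2n+1)(2n+2)m^2$. With this choice the general term $b_{n-1}x/(b_n - x)$ of the continued fraction in~\eqref{E:pn3} becomes
\[
\frac{(2n-1)(2n)m^2}{(2n+1)(2n+2)m^2 - 1}
= \frac{m^2(4n^2 - 2n)}{m^2(4n^2 + 6n + 2) - 1},
\]
while the leading term is $b_0 = (1)(2)m^2 = 2m^2$. Thus the left-hand side of Theorem~\ref{T:t4} reproduces exactly the left-hand side of the corollary.

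Next I would evaluate the series on the right of~\eqref{E:pn3}. The product telescopes to $\prod_{i=0}^n b_i = m^{2(n+1)}\prod_{i=0}^n (2i+1)(2i+2) = m^{2(n+1)}(2n+2)!$, so that
\[
\J{n=0}{\infty}\frac{(-1)^n}{\prod_{i=0}^n b_i}
= \J{n=0}{\infty}\frac{(-1)^n}{m^{2(n+1)}(2n+2)!}
= \J{k=1}{\infty}\frac{(-1)^{k-1}}{(2k)!\,m^{2k}},
\]
where the last equality is the reindexing $k = n+1$. Since $\cos z = \sum_{k\ge 0}(-1)^k z^{2k}/(2k)!$, the final sum is precisely the Taylor expansion of $1 - \cos(1/m)$. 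Hence the right-hand side of~\eqref{E:pn3} equals $1/(1 - \cos(1/m))$, which is the claimed value.

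Convergence is then handled by the ``if and only if'' clause of Theorem~\ref{T:t4}: the series $\sum_{n\ge 0}(-1)^n/(m^{2(n+1)}(2n+2)!)$ converges (absolutely, as its terms decay faster than any geometric sequence) for every nonzero $m$, so the continued fraction converges and the stated equality follows.

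I do not expect a genuine obstacle, as the argument is entirely parallel to Corollary~\ref{C:3c2}: the factor $(2n+2)(2n+3)$ used there for the cosecant is simply replaced by $(2n+1)(2n+2)$, which moves the factorial in the denominator from odd index $(2k+1)!$ to even index $(2k)!$ and thereby yields $\cos$ rather than $\sin$. The only point deserving explicit mention is that the right-hand side is well defined, i.e.\ that $1 - \cos(1/m) \neq 0$; for a nonzero integer $m$ one has $|1/m| \le 1 < 2\pi$, so $\cos(1/m) \neq 1$, and the denominator does not vanish.
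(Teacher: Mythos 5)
Your proposal is correct and is exactly the paper's own proof: the paper's argument for this corollary consists precisely of applying Theorem~\ref{T:t4} with $b_{n} = (2n+1)(2n+2)m^{2}$ and $x=1$, and your computation of the telescoping product $\prod_{i=0}^{n} b_{i} = m^{2(n+1)}(2n+2)!$ and the resulting series $1-\cos(1/m)$ just fills in the details the paper leaves implicit. The added check that $1-\cos(1/m)\neq 0$ for nonzero integer $m$ is a sensible touch the paper omits.
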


Note that the irrationality criterion gives that
$\cos (\frac{1}{m})$ is irrational for
$m$ either a non-zero integers or the square-root of
a positive integer.
\begin{proof}
(i) In \text{Theorem~\ref{T:t4}}
let  $b_{n} = \displaystyle{(2n + 1)(2n + 2)m^{2}}$\, and \,$x = 1$.\\
\end{proof}


{\allowdisplaybreaks
\begin{corollary}\label{C:3c4}
For all positive integers $\nu$ and all non-zero integers $m$ \,
(and indeed for all non-zero real numbers $m$ )
\begin{align*}
&(i) \, \displaystyle{ (\nu + 1) 4m^{2} + \K{n = 1}{\infty}
\frac{4m^{2}n(n + \nu )}{ 4m^{2}(n + 1)(n + \nu + 1) -1}
= \frac{1}
   {1 - (\nu)!(2m)^{\nu}J_{\nu}(\frac{1}{m}) }},
\end{align*}
 where $J_{\nu}(x)$ is the bessel function of the first kind
of order $\nu$.
\end{corollary}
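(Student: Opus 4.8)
The plan is to apply Theorem~\ref{T:t4} in exactly the manner used for Corollaries~\ref{C:3c2} and~\ref{C:3c3}; the only real work is choosing the right sequence $b_n$ and value $x$, and then recognizing the resulting series as a Bessel series. Comparing the denominator $4m^{2}(n+1)(n+\nu+1)-1$ in the claimed continued fraction with the form $b_n-x$ in~\eqref{E:pn3} suggests taking $x=1$ and $b_n=4m^{2}(n+1)(n+\nu+1)$. With these choices I would first verify the three matching conditions: that $b_0=4m^{2}(\nu+1)$ equals the leading term $(\nu+1)4m^{2}$; that the numerator $b_{n-1}x=4m^{2}n(n+\nu)$ agrees with the claimed numerator (since $b_{n-1}=4m^{2}n(n+\nu)$); and that $b_n-x$ reproduces the claimed denominator. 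This identifies the left-hand side of the corollary with the left-hand side of~\eqref{E:pn3}, so by the theorem it equals the reciprocal of the series $\sum_{n=0}^{\infty}(-1)^{n}/\prod_{i=0}^{n}b_i$.

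Next I would evaluate the product in that denominator. Since $b_i=4m^{2}(i+1)(i+\nu+1)$, the two factorial-type products telescope to give $\prod_{i=0}^{n}b_i=(4m^{2})^{n+1}(n+1)!\,(n+\nu+1)!/\nu!$. Substituting this into the series and re-indexing by $k=n+1$ (so that $(-1)^{n}=-(-1)^{k}$) converts the series into
\[
\sum_{n=0}^{\infty}\frac{(-1)^{n}}{\prod_{i=0}^{n}b_i}
= -\,\nu!\sum_{k=1}^{\infty}\frac{(-1)^{k}}{(4m^{2})^{k}\,k!\,(k+\nu)!}.
\]

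The heart of the argument is recognizing this sum through the defining series of the Bessel function of integer order, $J_{\nu}(x)=\sum_{k=0}^{\infty}(-1)^{k}(x/2)^{2k+\nu}/\bigl(k!\,(k+\nu)!\bigr)$. Setting $x=1/m$ gives $(2m)^{\nu}J_{\nu}(1/m)=\sum_{k=0}^{\infty}(-1)^{k}/\bigl((4m^{2})^{k}k!\,(k+\nu)!\bigr)$, so splitting off the $k=0$ term, which equals $1/\nu!$, shows that the series equals $1-\nu!(2m)^{\nu}J_{\nu}(1/m)$. Taking the reciprocal, as dictated by Theorem~\ref{T:t4}, yields exactly the claimed value. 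Finally, convergence is automatic: because $\prod_{i=0}^{n}b_i$ grows factorially, the series converges absolutely for every nonzero (real) $m$, and the equivalence in Theorem~\ref{T:t4} then guarantees convergence of the continued fraction.

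There is no genuine obstacle here; the one point demanding care is the bookkeeping of constants. I must ensure that the normalizing factor $\nu!(2m)^{\nu}$ emerges correctly from the combination of $(n+1)!$, the ratio $(n+\nu+1)!/\nu!$, and the power $(4m^{2})^{n+1}$, and that the index shift $k=n+1$ lands the separated $k=0$ term precisely as the constant $1$ in $1-\nu!(2m)^{\nu}J_{\nu}(1/m)$. A useful consistency check is that setting $\nu=0$ should formally recover a cosine-type identity in the spirit of Corollary~\ref{C:3c3}, and more generally that the half-integer normalization $(2m)^{\nu}$ matches the standard power-series form of $J_{\nu}$.
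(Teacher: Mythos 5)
Your proposal is correct and takes essentially the same route as the paper: the paper's proof applies Theorem~\ref{T:t4} with exactly your choices $b_{n}=4m^{2}(n+1)(n+\nu+1)$ and $x=1$, then identifies the resulting series with $1-\nu!\,(2m)^{\nu}J_{\nu}(1/m)$ from the power series of $J_{\nu}$. The only difference is one of exposition: the paper leaves the product evaluation and index shift implicit (``from the power series expansion for $J_{\nu}(x)$''), whereas you carry out that bookkeeping explicitly.
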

}

The Tietze irrationality criterion shows that if $\nu$ is a
non-negative integer and $m$ is a non-zero integer or the squareroot of
a positive integer then
$J_{\nu}(\pm \frac{1}{m})$ is irrational.

\begin{proof}
(1) In \text{Theorem~\ref{T:t4}}
letting  $b_{n} = \displaystyle{4(n + 1)(\nu +n + 1)m^{2}}$\, and \,
$x = 1$ \, gives
{\allowdisplaybreaks
\begin{align*}
&4(\nu + 1)m^{2}
  +  \K{n = 1}{\infty}
       \displaystyle{\frac{n(\nu + n)4m^{2}}
             {(n+1)(n + \nu + 1)4m^{2} - 1}}\\
 &= \frac{1}{\J{n = 0}{\infty}
        \displaystyle{ \frac{(-1)^{n}(1/m)^{2n+2}}
{\prod_{i = 0}^{n}
4(i + 1)(\nu +i + 1)}}}
 =  \frac{1}{1 -\displaystyle{
           (\nu)!
                 (2m)^{\nu}J_{\nu}(1/m)}},
\end{align*}}
from the power series expansion for $J_{\nu}(x)$.
\end{proof}

Taking $\nu$ to be  $0$ and   $m = 2 $ gives \eqref{Ex:eex2}
in the introduction.
\eqref{Ex:eex3} in the introduction follows by letting
 $m = \frac{1}{\sqrt{2}} $ in Corollary ~\ref{C:3c2}.\label{Ex:ex16}


\allowdisplaybreaks{
\begin{corollary}\label{C:3c5}
For all non-zero integers $m$ (and indeed for all non-zero real numbers $m$)
\begin{multline*}
\displaystyle{1 + \cfrac{1}{6m^{3} - 1 +
        \displaystyle{  \K{n = 2}{\infty}
         \frac{(3n-5)(3n-4)(3n-3)m^{3}}{(3n-2)(3n-1)(3n)m^{3} -1}}}}\\
 = \left(\frac{1}{3}\exp{(\displaystyle{-1/m})}+
\frac{2}{3}\exp{(\displaystyle{1/2m})}
\cos{\left(\sqrt{3}/2m\right)}\right)^{-1}.
\end{multline*}
\end{corollary}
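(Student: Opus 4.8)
The plan is to obtain this identity as one more specialization of Theorem~\ref{T:t4} with $x=1$, exactly in the style of Corollaries~\ref{C:3c2}--\ref{C:3c4}, the only change being that a cube-root-of-unity filter replaces the half-angle sine/cosine series used there. The first step is to recover the sequence $\{b_n\}$ from the displayed partial quotients. In Theorem~\ref{T:t4} with $x=1$ the $n$th numerator is $b_{n-1}$ and the $n$th denominator is $b_n-1$; matching these against the continued fraction in the statement forces $b_n=(3n)(3n-1)(3n-2)m^3$ for $n\ge 1$, together with $b_0=1$. With these choices the $n=1$ partial quotient is $b_0/(b_1-1)=1/(6m^3-1)$, and peeling it off reproduces precisely the outer layer $1+\cfrac{1}{6m^3-1+\cdots}$ of the statement, the tail being $\K{n=2}{\infty}\, b_{n-1}/(b_n-1)$.

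The second step is the denominator product in Theorem~\ref{T:t4}. Because the factors $(3i-2)(3i-1)(3i)$ telescope into a full factorial, I would show $\prod_{i=0}^{n}b_i=m^{3n}(3n)!$ for every $n\ge 0$ (the factor $b_0=1$ contributing nothing), so that with $x=1$ the series on the right of \eqref{E:pn3} becomes $\sum_{n=0}^{\infty}(-1)^n(1/m)^{3n}/(3n)!$. This converges absolutely for every nonzero $m$, so by the convergence clause of Theorem~\ref{T:t4} the continued fraction converges and equals the reciprocal of the series; this is what delivers the result for all nonzero real (indeed complex) $m$.

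The third step puts the series in closed form by multisection. Writing $v=1/m$ and $\omega=e^{2\pi i/3}$, the alternating cubic series is the ``$k\equiv 0\pmod 3$'' part of $e^{-v}$, namely $\sum_{n\ge 0}(-v)^{3n}/(3n)!=\tfrac{1}{3}\bigl(e^{-v}+e^{-\omega v}+e^{-\omega^2 v}\bigr)$. Since $\omega=-\tfrac{1}{2}+i\tfrac{\sqrt3}{2}$, the two conjugate terms combine as $e^{-\omega v}+e^{-\omega^2 v}=2e^{v/2}\cos(\sqrt3\,v/2)$, giving the sum $\tfrac{1}{3} e^{-1/m}+\tfrac{2}{3} e^{1/(2m)}\cos(\sqrt3/(2m))$. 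Its reciprocal is exactly the stated right-hand side, and combining this with the peeled-off first partial quotient completes the identification.

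The whole argument is mechanical once the substitution is fixed, so the only genuine choice is guessing $b_n$ and $b_0$. The step I expect to require the most care is the multisection identity together with the bookkeeping that $(-1)^n v^{3n}=(-v)^{3n}$, which is what makes the alternating series the cube-section of $e^{-v}$ rather than of $e^{v}$, and hence produces $e^{-1/m}$ and $e^{1/(2m)}$ rather than their reciprocals. Convergence, the telescoping product, and peeling off the leading term all follow the template already established in Corollaries~\ref{C:3c2}--\ref{C:3c4}.
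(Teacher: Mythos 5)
Your proof is correct and follows essentially the same route as the paper: both apply Theorem~\ref{T:t4} to the triple-product moduli and evaluate the resulting series $\sum_{n\ge 0}(-1/m)^{3n}/(3n)!$ via the cube-roots-of-unity (multisection) identity. The only immaterial difference is that the paper takes $b_n=(3n-2)(3n-1)(3n)$ with $x=1/m^3$ and then rescales (``simplifies'') the continued fraction at the end, whereas you fold $m^3$ into $b_n$ and take $x=1$, so the stated partial quotients appear directly and no final equivalence transformation is needed.
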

}
\begin{proof}
In \text{Theorem~\ref{T:t4}}
let \,$b_{0}=1$, $b_{n} = \displaystyle{(3n-2)(3n-1)(3n)}$,\,
for\, $n \geq 1$ and $x = 1/m^{3}$. Then
\[\displaystyle{1 + \cfrac{1/m^{3}}{6 - 1/m^{3} +
        \displaystyle{  \K{n = 2}{\infty}
         \frac{(3n-5)(3n-4)(3n-3)1/m^{3}}{(3n-2)(3n-1)(3n)  - 1/m^{3}}}}}
 = \frac{1}{\J{n = 0}{\infty}
        \displaystyle{ \frac{(-1)^{n}(1/m^{3n})}
{\displaystyle{(3n)!}}}}.
\]
Simplifying the continued fraction gives the left side
and finally the  right side equals
$ \left(\frac{1}{3}\exp{(\displaystyle{-1/m})}+
\frac{2}{3}\exp{(\displaystyle{1/2m})}
\cos{\left(\sqrt{3}/2m\right)}\right)^{-1}
$.
\end{proof}
By the Tietze criterion the irrationality of this last function follows when
 $m$ is a non-zero integer or the real cube-root of
a  non-zero integer.

\allowdisplaybreaks{

}

\begin{thebibliography}{9}
\bibitem{B89}
   Bruce C. Berndt,
	\emph{Ramanujan's Notebook's, PartII},
	Springer-Verlag, New York - Berlin - Heidelberg - London - Paris - Tokyo,
		1989.

\vspace{2pt}
\bibitem{gC22}
   G. Chrystal,
    \emph{Algebra, an Elementary Textbook, PartII, Second Edition},
 Black/Cam\-bridge University Press, London, 1922.

\vspace{2pt}
\bibitem{E27}
    L. Euler,
	De Transformationae serierum in fractiones continuas ubi
	simul haec theoria non mediocriter amplificatur,
	\emph{Omnia Opera}, Ser. I, Vol 15, B.G. Teubner, Lipsiae, 1927,
			pp. 661-700.

\vspace{2pt}
\bibitem{G74}
    J.W.L. Glaisher,
	\emph{On the Transformation of Continued Products into Continued
				Fractions},
	Proc. London Math. Soc. \textbf{5} (1874) 78-88.

\vspace{2pt}
\bibitem{JT80}
   William B. Jones and W.J. Thron,
   \emph{Continued Fractions Analytic Theory and Applications},
   Addison-Wesley, London-Amsterdam-Ontario-Sydney-Tokyo,1980.

\vspace{2pt}
\bibitem{LW92}
   Lisa Lorentzen and Haakon Waadeland,
   \emph{Continued Fractions with Applications},
    North-Holland, Amsterdam-London-New York-Tokyo, 1992.

\vspace{2pt}
\bibitem{oP13}
   Oskar Perron,
   \emph{Die Lehre von den Kettenbr\"{u}chen},
   B.G. Teubner, Leipzig-Berlin, 1913.

\vspace{2pt}
\bibitem{PWZ96}
     Marko Petkov\v{s}ek, Herbert S. Wilf and Doron Zeilberger,
     \emph{ A = B},
      A K Peters, Massachusetts, 1996.

\vspace{2pt}
\bibitem{P94}
	S. Pincherle,
	\emph{Delle Funzioni ipergeometriche e di varie questioni ad
	esse attienti},
	Giorn. Mat. Battaglini \textbf{32} (1894) 209-291.

\vspace{2pt}
\bibitem{W65}
    John Wallis,
	\emph{Arithmetica infinitorum, sive nova methodus inquirendi in
	curvilineorum quadraturam, aliaque difficiliori matheseos problemata},
	Oxford, 1665.


\end{thebibliography}
\end{document}